\newtheorem{thm}{Theorem}[section]
\newtheorem{defi}[thm]{Definition}
\newtheorem{lem}[thm]{Lemma}
\newtheorem{coro}[thm]{Corollary}
\newtheorem{prop}[thm]{Proposition}
\newtheorem{rem}[thm]{Remark}
\theoremstyle{definition}
\numberwithin{equation}{section}
\newcommand{\Z}{Z\cap \overline{W^u(x, \delta)}}
\author {Wenda Zhang}
\address{College of Mathematics and Statistics, Chongqing Jiaotong University, Chongqing, China \ 400074}
\email{wendazhang951@aliyun.com}
\author{zhiqiang li$^*$}
\address{College of Mathematics and Statistics, Chongqing University, Chongqing, China \ 401331}
\thanks{$*$ \textit{Corresponding Author}}
\email{zqli@cqu.edu.cn}
\author{Xiankun Ren}
\address{College of Mathematics and Statistics, Chongqing University, Chongqing, China \ 401331}
\email{xkren@cqu.edu.cn}
\keywords{Unstable measure theoretic pressure, Lyapunov exponent, Variational principle}
\subjclass[2000]{Primary 37D35, Secondary 37D30}
\begin{document}

\title[Unstable topological pressure on saturated subsets]{Sub-additive unstable topological pressure on saturated subsets}

\begin{abstract}
In this paper, we continue our investigation on sub-additive pressures for $C^1$-smooth partially hyperbolic diffeomorphisms. Under the assumption of unstable almost product property, we show that the unstable Bowen topological pressure on the saturated set of a given non-empty compact connected set of invariant measures equals the infimum of the summation of unstable metric entropy and the corresponding \emph{Lyapunov exponent}, where the infimum is taken over all invariant measures inside the compact connected set above. Moreover, we also show that the unstable topological capacity pressure on the saturated sets above coincide with the unstable topological pressure of the whole system. 
\end{abstract}

\maketitle

\section{Introduction}

As a natural generalization of entropy, pressure is another key invariant for dynamical systems, which roughly measures the dynamical complexity on given (sub-additive) potential functions in both topological and measure theoretical settings. For references, one can see for example \cite{Ruelle},  \cite{Walters1}, \cite{Pesin1},  \cite{K. Falconer}, \cite{Bar}, \cite{Huang1},  \cite{Hu}, etc.

In recent years, the theory of unstable entropy and pressure for $C^1$-smooth partially hyperbolic diffeomorphisms are
intensively investigated. Unstable topological and metric entropy are more refined invariants, which rule out the complexity on central directions and focus on that on unstable directions.  They were introduced and studied in \cite{Hu1} by Hu, Hua, and Wu; moreover, they obtained the corresponding Shannon-McMillan-Breiman theorem, as well as the corresponding variational principle.
In \cite{Tian}, Tian and Wu generalize the result above with additional consideration of an arbitrary subset (not necessarily compact or invariant).
In \cite{Zhu2}, Hu, Wu, and Zhu investigated the unstable topological pressure for additive potentials. In \cite{Zhang} and \cite{Zhang1}, we worked on both unstable topological and measure theoretical pressure for $C^1$-smooth partially hyperbolic diffeomorphisms with sub-additive potentials, the expected variational principle was established.

In this paper, we we continue our investigation on sub-additive pressures for $C^1$-smooth partially hyperbolic diffeomorphisms. We mainly focus on unstable topological pressures on saturated sets in terms of Bowen's picture and the capacity picture. On one hand, we impose an analog of the almost product structure (see \cite{Sulli}) in current setting--the unstable almost product property, and then we are able to calculate the unstable Bowen's topological pressure on saturated sets. On the other hand, we also show that the unstable capacity pressure on the saturated sets coincide with the pressure of the whole system.

Throughout the paper $M$ is a finite dimensional, $C^1$-smooth, connected and compact Riemannian manifold
without boundary. Let $f : M \rightarrow M$ be a $C^{1}$-smooth partially hyperbolic diffeomorphism. We denote by $\mathcal{M}_{f}(M)$  the collection of $f$-invariant Borel probability measures on $M$. We always consider a sequence of continuous sub-additive potentials $\mathcal{G}=\{\log g_n\}_n$ of $f$ on $M$. 

The main results of this paper read as follows.

\begin{thm}\label{main1} 
Let $f:M\rightarrow M$ be a $C^1$-smooth partially hyperbolic diffeomorphism. Suppose $\mathcal{G}=\{\log g_n\}^{\infty}_{n=1}$ is a sequence of sub-additive potentials of $f$ such that the Lyapunov exponent $\mathcal{G}_*(\mu)$ is lower semi-continuous (w.\,r.\,t.\,$\mu$). If $(M, f)$ has unstable almost product property for some blow up function $g$, then for any non-empty compact connected set $K\subseteq \mathcal{M}_f(M)$, one has $$P^u_B(f, \mathcal
{G}, G_K)=\inf\{h^u_{\mu}(f)+\mathcal{G}_*(\mu)\mid \mu\in K\}.$$
\end{thm}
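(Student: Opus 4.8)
We prove the two inequalities separately. For the upper bound, fix any $\mu\in K$ and write $V_f(x)$ for the set of weak$^*$ limit points of the empirical measures $\frac1n\sum_{i=0}^{n-1}\delta_{f^ix}$. Every $x\in G_K$ has $\mu\in V_f(x)$, so $G_K\subseteq\{x:\mu\in V_f(x)\}$. I would cover the latter set at scale $\epsilon$ by unstable Bowen balls $B^u_n(y,\epsilon)$ whose centres $y$ have length-$n$ empirical measure $\delta$-close to $\mu$, and then bound the maximal cardinality of an $(\epsilon,n)$-unstable-separated family of such centres: by a Misiurewicz-type argument any weak$^*$ limit of the averaged empirical measures of such a family is an $f$-invariant measure $\sigma$ that is $\delta$-close to $\mu$ and has $h^u_\sigma(f)$ no smaller than the exponential growth rate minus an error $\mathrm{o}_\epsilon(1)$, while the sub-additivity of $\mathcal G$ and a telescoping/block estimate bound the corresponding weight rate by $\mathcal G_*(\sigma)$. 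Since $h^u_\cdot(f)$ is upper semi-continuous (\cite{Hu1}) and $\mathcal G_*$ is automatically upper semi-continuous (being an infimum of affine continuous functions), letting $\delta\to0$ and then $\epsilon\to0$ gives $P^u_B(f,\mathcal G,\{x:\mu\in V_f(x)\})\le h^u_\mu(f)+\mathcal G_*(\mu)$; taking the infimum over $\mu\in K$ yields ``$\le$''. This is the localization to $\{x:\mu\in V_f(x)\}$ of the easy half of the sub-additive unstable variational principle of \cite{Zhang,Zhang1}.

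For the reverse inequality, put $s:=\inf\{h^u_\mu(f)+\mathcal G_*(\mu):\mu\in K\}$ and fix $\eta>0$; it suffices to build a compact set $Y\subseteq G_K$ with $P^u_B(f,\mathcal G,Y)\ge s-\eta$. The crucial point is that \emph{every} $\alpha\in K$ satisfies $h^u_\alpha(f)+\mathcal G_*(\alpha)\ge s$, so all approximating measures in the construction automatically carry enough unstable pressure. Using the compactness and connectedness of $K$ together with the convex structure of $\mathcal M_f(M)$, I would select a sequence $(\alpha_k)_{k\ge1}$ in $K$ with $d(\alpha_k,\alpha_{k+1})\to0$ such that every $\mu\in K$ is a weak$^*$ limit point of $(\alpha_k)_k$ (the standard ``chain'' lemma for compact connected sets of invariant measures). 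For each $k$ I would then combine the Katok-type entropy formula for unstable entropy (\cite{Hu1,Tian}), the unstable Shannon--McMillan--Breiman theorem, Kingman's sub-additive ergodic theorem, and Egorov's theorem to produce a length $N_k$ and an $(\epsilon,N_k)$-unstable-separated set $\Gamma_k$ with $\#\Gamma_k\ge\exp\bigl(N_k(h^u_{\alpha_k}(f)-\gamma_k)\bigr)$ such that every $y\in\Gamma_k$ has $\frac1{N_k}\sum_{i<N_k}\delta_{f^iy}$ within $\delta_k$ of $\alpha_k$ and $\log g_{N_k}(y)\ge N_k(\mathcal G_*(\alpha_k)-\gamma_k)$, with $\delta_k,\gamma_k\downarrow0$.

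Next I would concatenate: for a sufficiently rapidly growing sequence of stage multiplicities, pick one point from each $\Gamma_k$ and apply the unstable almost product property for the blow-up function $g$, obtaining a genuine point whose unstable orbit shadows the prescribed concatenation of orbit segments within the blow-up tolerance and with short gaps of length at most $g(\epsilon)$. Let $Y$ be the closure of the set of all points so obtained. One checks $Y\subseteq G_K$: the $\alpha_k$-blocks become arbitrarily long while the gaps and block-transition errors are negligible, and $(\alpha_k)$ is asymptotically dense in $K$ with $\delta_k\to0$, so the set of weak$^*$ limit points of the empirical measures of any $x\in Y$ is exactly $K$. Finally I would equip $Y$ with the natural Moran measure $m$ distributing, branch by branch, mass proportional to $\prod_k(\#\Gamma_k)^{-1}$, estimate $m\bigl(B^u_n(x,\epsilon)\bigr)$ from above via the $(\epsilon,N_k)$-separation of distinct branches, and estimate $\log g_n(x)$ from below using the sub-additivity of $\mathcal G$ across the blocks and the weight bounds on the $\Gamma_k$; this gives $\liminf_{n\to\infty}\frac1n\bigl(-\log m(B^u_n(x,\epsilon))+\log g_n(x)\bigr)\ge s-\eta$ uniformly on $Y$, and the mass-distribution principle for unstable Bowen pressure then yields $P^u_B(f,\mathcal G,Y)\ge s-\eta$. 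Letting $\eta\to0$ completes the proof.

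The main obstacle is the gluing step together with the uniform $\liminf$ estimate. One has to calibrate the block lengths $N_k$ and the stage multiplicities so fast that: (i) the short gaps $g(\epsilon)$ and the transition distortions of the sub-additive potential $\mathcal G$ across consecutive blocks contribute only $\mathrm{o}(n)$ -- this is delicate because only continuity, and no regularity, of the functions $g_n$ is assumed, so the sub-additivity has to be exploited carefully and one must lower-bound $\log g_n(x)$ along the shadowing orbit even though its empirical measures lie near but not exactly at the $\alpha_k$; it is here that the lower semi-continuity of $\mathcal G_*$ is essential, since combined with its automatic upper semi-continuity it makes $\mathcal G_*$ continuous and lets the perturbed weights be compared with $\mathcal G_*(\alpha_k)$; (ii) the empirical measure along \emph{every} time $n$, not merely at block ends, stays close to the currently active $\alpha_k$, so that $V_f(x)=K$ rather than a strictly larger set; and (iii) the counting of branches within a single unstable Bowen ball is not spoiled by the overlap of the blow-up neighbourhoods of adjacent orbit segments. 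Verifying the mass-distribution lower bound uniformly over all of $Y$ is the technically heaviest part.
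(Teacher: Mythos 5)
Your overall architecture matches the paper's (upper bound by a localized covering/Misiurewicz argument over a finite cover of $K$ by measure-neighborhoods; lower bound by gluing separated orbit segments along a chain $\{\alpha_k\}\subseteq K$ with $d(\alpha_k,\alpha_{k+1})\to 0$ via the unstable almost product property), but there are two genuine gaps in the lower bound. The first concerns non-ergodic members of $K$. You propose to produce, for each $\alpha_k\in K$, an unstable $(\epsilon,N_k)$-separated set of cardinality $\geq e^{N_k(h^u_{\alpha_k}(f)-\gamma_k)}$ whose points additionally satisfy $\log g_{N_k}(y)\geq N_k(\mathcal{G}_*(\alpha_k)-\gamma_k)$, by combining the Katok-type formula, the unstable SMB theorem, Kingman's theorem and Egorov. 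This works only when $\alpha_k$ is ergodic; for a non-ergodic $\alpha_k$ (and $K$ may contain no ergodic measures at all) the claimed simultaneous bounds are false in general: take $\alpha_k=\frac12\nu_1+\frac12\nu_2$ with $\nu_1$ of large unstable entropy but small Lyapunov exponent and $\nu_2$ the reverse; then no positive-measure set of points carries both the entropy count at level $h^u_{\alpha_k}(f)$ and the weight at level $\mathcal{G}_*(\alpha_k)$. What is needed (and what the paper proves) is the weaker statement that the separated set has \emph{total} weight $\sum_{y\in\Gamma_k}g_{N_k}(y)\geq e^{N_k(h^u_{\alpha_k}(f)+\mathcal{G}_*(\alpha_k)-\kappa)}$, established first for ergodic measures (Proposition \ref{first half for ergodic}) and then extended to all invariant measures (Corollary \ref{inv}) by replacing $\alpha_k$ with a nearby ergodic measure; that replacement uses precisely the two hypotheses you underuse, namely the lower semi-continuity of $\mathcal{G}_*$ and the unstable entropy denseness of ergodic measures, the latter being itself a nontrivial consequence of the unstable almost product property (Theorems \ref{uni sep}, \ref{subset} and Corollary \ref{un dense}). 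In your write-up the lower semi-continuity is invoked only to control perturbed weights in the gluing step, which is not where it is actually needed, and the ergodic-approximation layer is missing altogether.

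The second gap is your final estimate $P^u_B(f,\mathcal{G},Y)\geq s-\eta$. Your Moran-measure/mass-distribution scheme requires a lower bound on $\log g_n(x)$ at the glued points, which you propose to get ``using the sub-additivity of $\mathcal{G}$ across the blocks''; but sub-additivity only bounds $g_{m+n}$ from \emph{above} by the block values, and no modulus of continuity for $g_n$ is assumed in Theorem \ref{main1} (the ratio condition on $\gamma_n(\mathcal{G},\epsilon)$ appears only in Theorem \ref{main2}), so the value of $g_n$ at a shadowing point cannot be compared from below with the values at the centers $y\in\Gamma_k$. You flag this as the main obstacle but do not resolve it, and as stated it does not go through. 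The paper's Claim IV avoids the issue entirely: since the quantity $M^u$ is defined through $\sup_{y\in B^u_{n_i}(x_i,\epsilon)}g_{n_i}(y)$ over covering balls, one never needs a pointwise lower bound for $g_n$ on $Z$; instead one runs a Pfister--Sullivan-type prefix-counting argument over the labeled tree of branches, using only the separation of distinct branches (Claim I) and the total weights $E(\Gamma_k)=\sum_{y\in\Gamma_k}g_{n_k}(y)\geq e^{n_kS^*}$, together with the calibration $M_{k+1}/M_k\to1$. If you want to keep a mass-distribution formulation you would have to distribute mass proportionally to the weights $g_{n_k}(y)$ (not uniformly) and compare $m(B^u_n(x,\epsilon))$ with $\sup_{B}g_n$ rather than with $g_n(x)$; otherwise you should switch to the weighted covering/counting argument. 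The upper-bound half of your proposal and the verification $Z\subseteq G_K$ are essentially the paper's argument and are fine modulo routine details.
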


For the unstable capacity pressure on saturated sets, we have the following result.

\begin{thm}\label{main2}
Let $f:M\rightarrow M$ be a $C^1$-smooth partially hyperbolic diffeomorphism. Suppose $\mathcal{G}=\{\log g_n\}^{\infty}_{n=1}$ is a sequence of sub-additive potentials of $f$ satisfying the following condition: $$\limsup\limits_{\epsilon\rightarrow 0}\limsup\limits_{n\rightarrow\infty}\dfrac{\log \gamma_{n}(\mathcal{G},\epsilon)}{n}=0,$$ where $$\gamma_{n}(\mathcal{G},\epsilon)=\sup\{\dfrac{g_{n}(x)}{g_{n}(y)}\,|\,d(x,y)\leq \epsilon \}.$$ If $(M, f)$ enjoys the unstable almost product property for some blow up function $g$, then for any non-empty compact connected subset $K\subseteq \mathcal{M}_f(M)$, one has $$\underline{CP}^u(f, \mathcal{G}, G_K)=P^{u}(f,\mathcal{G}).$$
\end{thm}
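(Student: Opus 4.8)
The plan is to deduce Theorem~\ref{main2} from two auxiliary facts, one about the lower capacity unstable pressure and one about the saturated set. First, the inequality $\underline{CP}^u(f,\mathcal G,G_K)\le P^u(f,\mathcal G)$ is immediate: the lower capacity unstable pressure is monotone under inclusion of subsets (an $(n,\epsilon)$-separated subset of $G_K\cap\overline{W^u(x,\delta)}$ is in particular one of $\overline{W^u(x,\delta)}$), and on the whole manifold $\underline{CP}^u(f,\mathcal G,M)=P^u(f,\mathcal G)$ --- here the hypothesis $\limsup_{\epsilon\to0}\limsup_{n\to\infty}\tfrac{\log\gamma_n(\mathcal G,\epsilon)}{n}=0$ is exactly what makes the Carath\'eodory and the separated-set descriptions of the unstable pressure on $M$ agree (cf.\ \cite{Zhu2, Zhang}). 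So the whole content is the reverse inequality $\underline{CP}^u(f,\mathcal G,G_K)\ge P^u(f,\mathcal G)$, which I would obtain from: (A) if $Z\cap\overline{W^u(x,\delta)}$ is dense in $\overline{W^u(x,\delta)}$ for every $x$, then $\underline{CP}^u(f,\mathcal G,Z)=\underline{CP}^u(f,\mathcal G,M)$; and (B) for every $x$ the set $G_K\cap\overline{W^u(x,\delta)}$ is dense in $\overline{W^u(x,\delta)}$.

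For (A) only ``$\ge$'' needs an argument, and it is a routine perturbation driven by the hypothesis on $\gamma_n(\mathcal G,\epsilon)$. Given $x$ and an $(n,\epsilon)$-separated set $E\subseteq\overline{W^u(x,\delta)}$, each dynamical ball $\{z\in\overline{W^u(x,\delta)}:d_n(z,y)<\epsilon/3\}$, $y\in E$, is open in $\overline{W^u(x,\delta)}$, so by denseness one may choose $z_y\in Z$ inside it; then $\{z_y:y\in E\}$ is an $(n,\epsilon/3)$-separated subset of $Z\cap\overline{W^u(x,\delta)}$ of the same cardinality, and since $d(z_y,y)\le d_n(z_y,y)<\epsilon$ one has $g_n(z_y)\ge\gamma_n(\mathcal G,\epsilon)^{-1}g_n(y)$, so the $\mathcal G$-weighted sum over $\{z_y\}$ is at least $\gamma_n(\mathcal G,\epsilon)^{-1}$ times that over $E$. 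Applying $\tfrac1n\log$, then $\liminf_{n\to\infty}$, then $\epsilon\to0$, the correction $\tfrac1n\log\gamma_n(\mathcal G,\epsilon)$ vanishes by hypothesis, yielding (A). (One also uses here the uniformity of the plaque-wise complexity behind $\underline{CP}^u(f,\mathcal G,M)=P^u(f,\mathcal G)$, so that recovering the supremum over $x$ suffices.)

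For (B) the unstable almost product property enters. Fix $x$, a point $w\in\overline{W^u(x,\delta)}$, and $r>0$. Since $K$ is compact and connected, arrange a sequence $(\mu_j)_j$ in $K$ which, for each $m$, eventually runs through a $\tfrac1m$-net of $K$ with consecutive terms $\tfrac1m$-close (so $\{\mu_j\}$ is dense in $K$ and $d(\mu_j,\mu_{j+1})\to0$). Using the unstable almost product property for the blow-up function $g$, together with the existence of orbit segments $\epsilon_j$-typical for $\mu_j$ inside plaques (via the unstable Shannon--McMillan--Breiman theorem of \cite{Hu1}), concatenate: a short first block shadowing an orbit segment issued from a point of $\overline{W^u(x,\delta)}$ within $r/2$ of $w$, followed by blocks of lengths $\ell_1<\ell_2<\cdots$ that successively shadow, at scale $\epsilon_j\to0$, segments $\epsilon_j$-typical for $\mu_j$, the insertion gaps controlled by $g$ and the $\ell_j$ growing slowly enough that these gaps stay negligible relative to the elapsed time. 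The almost product property produces $y\in\overline{W^u(x,\delta)}$ realizing this itinerary, with $d(y,w)<r$ thanks to the first block. For a suitable choice of the $\ell_j$, the empirical measures $\tfrac1n\sum_{i=0}^{n-1}\delta_{f^iy}$ accumulate exactly on $K$: the short first block and the controlled gaps contribute nothing to the limit set, along the $\mu_j$-blocks the empirical measures stay $o(1)$-close to $K$ (being convex combinations of measures near consecutive, hence close, $\mu_{j-1},\mu_j$), and the $\tfrac1m$-net structure makes them visit every point of $K$. Hence $y\in G_K$, and $G_K$ meets every ball of $\overline{W^u(x,\delta)}$.

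Combining (A) and (B), $\underline{CP}^u(f,\mathcal G,G_K)=\underline{CP}^u(f,\mathcal G,M)=P^u(f,\mathcal G)$, which is Theorem~\ref{main2}. I expect the main obstacle to be the quantitative bookkeeping in (B): the $\mu_j$-typical blocks and the almost-product gaps must be interleaved so that simultaneously (i) at every time $n$ the empirical measure is $o(1)$-close to $K$, (ii) the limit set is all of $K$ and nothing more, and (iii) the whole orbit remains in (a neighbourhood of) a single prescribed plaque and starts near the prescribed point, which forces a careful scale-by-scale choice of the parameters $m\to\infty$, $\epsilon_j\to0$ and $\ell_j$, in the spirit of Pfister--Sullivan-type constructions adapted to the unstable, sub-additive setting. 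A secondary point is to carry out (B) inside a plaque that also nearly realizes $P^u(f,\mathcal G)$, which is handled by the plaque-wise uniformity already invoked in (A).
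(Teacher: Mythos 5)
Your decomposition is genuinely different from the paper's. The paper does not go through density at all: it picks one ergodic $\mu$ with $h^u_\mu(f)+\mathcal G_*(\mu)>P^u(f,\mathcal G)-\eta/2$ (variational principle of \cite{Zhang}), uses Proposition \ref{first half for ergodic} to produce a weighted $(\rho^*,\mathcal N,3\epsilon^*)$ $u$-separated set $\Gamma_{\mathcal N}$ in a plaque, and then glues this single block, via the unstable almost product property, onto the itinerary of one point $z_0\in G_K$ that tracks a sequence dense in $K$; the hypothesis on $\gamma_n(\mathcal G,\epsilon)$ is used exactly once, to transfer the weight of $\Gamma_{\mathcal N}$ to the perturbed points of the resulting set $G^{\mathcal N}\subseteq G_K$. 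Your scheme decouples the two roles: (B) density of $G_K$ in unstable plaques (built by the same concatenation machinery, with a pinning first block), and (A) insensitivity of $\underline{CP}^u$ to passing to a dense subset, with the distortion hypothesis entering in the same way. A genuine advantage of your route is that it supplies good separated subsets of $G_K$ at \emph{every} large time $n$, so the $\liminf_{n}$ in the definition of $\underline{CP}^u$ is actually controlled, whereas the paper's final estimate is only exhibited at the single time $\mathcal N$; your statement is also plaque-wise, hence formally stronger.

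Two steps in your outline carry real content and are currently asserted rather than proved. First, $\underline{CP}^u(f,\mathcal G,M)=P^u(f,\mathcal G)$ is not a matter of ``the Carath\'eodory and separated-set descriptions agreeing'': the left-hand side is a $\liminf$ in $n$ while $P^u$ is defined through a $\limsup$, so the inequality $\underline{CP}^u(f,\mathcal G,M)\geq P^u(f,\mathcal G)$ needs an argument. It does follow from Proposition \ref{first half for ergodic} (which gives, for every ergodic $\mu$ and \emph{all} large $n$, plaque separated sets of weight at least $e^{n(h^u_\mu(f)+\mathcal G_*(\mu)-\kappa)}$) combined with the variational principle of \cite{Zhang}; once you invoke these, your proof consumes the same two main ingredients as the paper's. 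Second, in (B): $K$ may contain no ergodic measure, so ``orbit segments $\epsilon_j$-typical for $\mu_j$'' cannot come from Birkhoff or the unstable SMB theorem alone; you need Theorem \ref{uni sep} (or its proof, which concatenates generic segments of ergodic components) to manufacture, inside plaques, segments whose empirical measure is near an arbitrary invariant $\mu_j\in K$. Also note that a blown-up ball $B^u_{n}(g,w,\epsilon)$ may exclude the time $i=0$, so $d(y,w)<r$ is not immediate from the definition of the first block; it is rescued by the uniform backward contraction of $f^{-1}$ along unstable leaves (pull back from the first good time of $\Lambda$). With these repairs your argument goes through.
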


The paper is organized as follows. In Section 2, we give necessary definitions and preliminaries. In Section 3, we calculate the unstable Bowen topological pressure on the saturated subsets $G_{K}$, and the proof of the result about unstable capacity pressure on $G_K$ is given in Section 4.

\section{Definitions and Preliminaries}

In the whole paper, $M$ is a finite dimensional, $C^1$-smooth, connected and compact Riemannian manifold
without boundary. Let $f : M \rightarrow M$ be a $C^{1}$-smooth partially hyperbolic diffeomorphism. 

\begin{defi}\label{pot}
A sequence of continuous functions $\mathcal{G} = \{\log g_{n}\}
_{n=1}^{\infty}$ on $M$ is
called a sequence of sub-additive potentials of $f$ if
$$\log g_{m+n}(x)\leq \log g_{n}(x)+\log g_{m}(f^{n}x), \,\text{for any}\; x\in M\,\text{and}\;m,n\in\mathbb{N}.$$
\end{defi}

\begin{rem}
For any $f$-invariant Borel probability measure $\mu$, set
$$\mathcal{G}_{*}(\mu)=\lim\limits_{n\rightarrow \infty}\frac{1}{n}\int\log g_{n}d\mu,$$
and $\mathcal{G}_{*}(\mu)$ is called the Lyapunov exponent of $\mathcal{G}$ with respect to $\mu$. The existence of this limit follows from a sub-additive argument. It takes values in
$[-\infty, +\infty)$. Moreover, the Sub-additive Ergodic Theorem (see \cite{Walters2}, Theorem 10.1) implies that for an ergodic measure $\mu$, one has $$\mathcal{G}_{*}(\mu)=\lim\limits_{n\rightarrow \infty}\frac{1}{n}\log g_{n}(x), \mu\,\text{-}\,a.e.\,x.$$
\end{rem}

Take $\epsilon_{0} > 0$
small. Let $\mathcal{P} = \mathcal{P}_{\epsilon_{0}}$ denote the set of finite Borel partitions $\alpha$ of $M$ whose elements
have diameters smaller than or equal to $\epsilon_{0}$, that is, $\mbox{diam}\;\alpha := \sup\{\mbox{diam}\;A : A \in
\alpha\} \leq \epsilon_{0}$. For each $\beta \in \mathcal{P}$ we can define a finer partition $\eta$  such that $\eta(x) =\beta(x) \cap W^{u}_{loc}(x)$ for each $x \in M$, where $W^{u}_{loc}(x)$ denotes the local unstable manifold
at $x$ whose size is greater than the diameter $\epsilon_{0}$ of $\beta$. Since $W^{u}$ is a continuous
foliation, $\eta$ is a measurable partition with respect to any Borel probability measure
on $M$.

Let $\mathcal{P}^{u}$ denote the set of partitions $\eta$ obtained in this way and \emph{subordinate
to unstable manifolds}. Here a partition $\eta$ of $M$ is said to be subordinate to unstable
manifolds of $f$ with respect to a measure $\mu$ if for $\mu$-almost every $x, \eta(x) \subset W^{u}(x)$
and contains an open neighborhood of $x$ in $W^{u}(x)$. It is clear that if $\alpha \in \mathcal{P}$
satisfies $\mu(\partial\alpha) = 0$, where $\partial\alpha := \cup_{A\in \alpha}\partial A$, then the corresponding $\eta$ given by
$\eta(x) = \alpha(x) \cap W^{u}_{loc}(x)$ is a partition subordinate to unstable manifolds of $f$.

Given any probability measure $\nu$ and any measurable partition $\eta$ of $M$, and denote by $\eta(x)$ the element of $\eta$ containing $x$.
The \emph{canonical system of conditional measures}
for $\nu$ and $\eta$ is a family of probability measures $\{\nu^{\eta}_{x} : x \in M\}$ with $\nu^{\eta}_{x}(\eta(x))= 1$,
such that for every measurable set $B \subseteq M, x \mapsto \nu^{\eta}_{x}(B)$ is measurable and
$$\nu(B) =\int_{X}\nu^{\eta}_{x}(B)d\nu(x).$$ This is also called the measure disintegration of $\nu$ over $\eta$. A classical result of Rokhlin (cf.\,\cite{Rohlin}) says that if $\eta$ is a measurable partition,
then there exists a system of conditional measures with respect to $\eta$. It is essentially
unique in the sense that two such systems coincide for sets with full $\nu$-measure. For
measurable partitions $\alpha$ and $\eta$, let
$$H_{\nu}(\alpha|\eta):=-\int_{M}\log\nu^{\eta}_{x}(\alpha(x))d\nu(x).$$
denote the conditional entropy of $\alpha$ for given $\eta$ with respect to $\nu$.

The unstable metric entropy in \cite{Hu1} is defined as follows.
\begin{defi} For any $\mu\in\mathcal{M}_f(M)$, any $\eta\in \mathcal{P}^u$, and any $\alpha\in\mathcal{P}$, define $$h_{\mu}(f, \alpha|\eta)=\limsup_{n\rightarrow\infty}\frac{1}{n}H_{\mu}(\alpha_{0}^{n-1}|\eta),$$ and $$h_{\mu}(f|\eta)=\sup\limits_{\alpha\in\mathcal{P}}h_{\mu}(f, \alpha|\eta).$$
The unstable metric entropy of $f$ is defined by
$$h^{u}_{\mu}(f)=\sup\limits_{\eta\in\mathcal{P}^{u}}h_{\mu}(f|\eta).$$
\end{defi}

\begin{rem}\label{rem}
In Theorem A of \cite{Hu1}, the authors proved that for any ergodic measure $\mu$, any $\alpha\in\mathcal{P},$ and any $\eta\in \mathcal{P}^u$,  one has $h^u_{\mu}(f)=h_{\mu}(f|\eta)=h_{\mu}(f, \alpha|\eta).$ Moreover, Corollary A.2 of \cite{Hu1} shows that the superior limit above for $h_{\mu}(f, \alpha|\eta)$ is actually the limit. 
\end{rem}

We denote by $d^u$ the metric induced by the Riemannian structure on the unstable manifold, and for any positive integer $n$, let $$d^u_n(x,y)=\max\limits_{0\leq j\leq n-1}d^u(f^j(x), f^j(y)).$$ For any $\epsilon>0$, the unstable $(n, \epsilon)$-Bowen ball around $x\in M$ is: $$B^u_n(x, \epsilon)=\{y\in W^u(x)\mid d^u_n(x, y)\leq\epsilon\}.$$
Let $W^u(x, \delta)$ be the open ball inside $W^u(x)$ centered at $x$ with radius $\delta$ with respect to the metric $d^u$.

\begin{defi} Let $g: \mathbb{N}\rightarrow \mathbb{N}$ be a non-decreasing unbounded function. $g$ is said to be blow up if the following hold: $$g(n)\leq n \;\text{and}\; \lim\limits_{n\to \infty}\dfrac{g(n)}{n}=0.$$
Set $\Lambda_n=\{0,\cdots, n-1\}$. For any $x\in M$ and any $\epsilon>0$, the unstable $g$-blow up of $B^u_n(x, \epsilon)$ is the following closed set: $$
\begin{aligned}B^u_n(g, x, \epsilon)&
:=\{y\in W^u(x)\mid \\
&\exists\, \Lambda\subseteq \Lambda_n \;\text{with}\; |\Lambda_n\setminus\Lambda|\leq g(n)\;\text{and} \; d^u(f^ix, f^iy)\leq \epsilon, \forall i\in \Lambda\}.
\end{aligned}$$
\end{defi}

\begin{defi}\label{un prod}
We say the dynamical system $(M, f)$ has the unstable almost product property for some blow up function $g$ if there is a non-increasing function $m:\mathbb{R}^+\rightarrow \mathbb{N}$ such that for any positive integer $K$, any $x_i\in M$, any $\epsilon_i>0$, and any $n_i>m(\epsilon_i)$, $1\leq i\leq K$, one has $$\bigcap\limits_{i=1}^{K}f^{-M_{i-1}}B^u_{n_{i}}(g, x_i, \epsilon_i)\neq\emptyset,$$ where $M_0=0, M_j=n_1+\cdots+n_j, 1\leq j\leq K-1.$
\end{defi}

\begin{defi} Let there be given a $C^1$-smooth partially hyperbolic diffeomorphism $f: M\rightarrow M$. We say that ergodic measures of $(M, f)$ are unstable entropy dense, if for any $\nu\in \mathcal{M}_f(M)$ and any neighborhood $F$of $\nu$,  whenever $h^*<h^u_\nu(f)$, one can find an ergodic measure $\mu\in F$ such that $h^*<h^u_\mu(f)$.
\end{defi}

Given a positive integer $n$, small tolerance $\rho>0$ and $\epsilon>0$, we say two points $y, z\in\overline{W^u(x, \delta)}$ are $(\rho, n, \epsilon)\,u$-separated if $$|\{0\leq i\leq n-1\mid d^u(f^iy, f^iz)>\epsilon\}|\geq n\rho.$$ A subset $S\subseteq \overline{W^u(x, \delta)}$ is called a $(\rho, n, \epsilon)\,u$-separated set if any two points in $S$ are $(\rho, n, \epsilon)\,u$-separated. For any $x\in M$, consider the empirical measure at $x$ as follows: $$\mathcal{E}_n(x):=\dfrac{1}{n}\sum\limits_{i=0}^{n-1}\delta_{f^ix}.$$ Suppose $\mu\in\mathcal{M}(M)$ and $F$ is a neighborhood of $\mu$. Set $$M_{n, F}:=\{x\in M\mid \mathcal{E}_n(x)\in F\}.$$ Denote by $N^u(F, \epsilon, n, x, \delta)$ the maximal cardinality of an $(n, \epsilon)\,u$-separated subset of $\overline{W^u(x, \delta)}\cap M_{n, F};$ denote by $N^u(F, \epsilon, n, \rho, x, \delta)$ the maximal cardinality of a $(\rho, n, \epsilon)\,u$-separated subset of $\overline{W^u(x, \delta)}\cap M_{n, F}.$

\begin{defi}\label{uni sep def}
A partially hyperbolic diffeomorphism $f: M\rightarrow M$ is said to have the unstable uniform separation property, if for any $\kappa>0$, there exist positive numbers $\rho^*$ and $\epsilon^*$ such that for any ergodic $\mu$, any neighborhood $F$ of $\mu$ in $\mathcal{M}(M)$, and any $\delta>0$, there is a positive integer $N$ such that if $n\geq N$, then $$\mathop{esssup}\limits_{\mu}N^u(F, \epsilon^*, n, \rho^*, x, \delta)\geq e^{n(h^u_{\mu}(f)-\kappa)}.$$
\end{defi}

\begin{rem}\label{sep}
The definition above appeared in Section 5 of \cite{Tian}, and they showed that $C^1$-smooth partially hyperbolic systems enjoy this property, see Theorem D there. In fact, they proved that there is a set $C$ with $\mu(C)>0$ such that for any $x\in C$, if $n\geq N$, one can construct a $(\rho^*, n, \epsilon^*)\,u$-separated set of $\overline{W^u(x, \delta)}\cap M_{n, F}$ with cardinality at least $e^{n(h^u_{\mu}(f)-\kappa)}$.
\end{rem}

For any $g\in C(M)$ and $\mu\in\mathcal{M}(M)$, set $$\langle g, \mu\rangle=\int_{M}gd\mu.$$ There exists a countable and separating set of continuous functions $\{g_1, \cdots, g_n,\cdots\}$ with $0\leq g_k(x)\leq 1$ such that the following metric induces the weak$^*$-topology on $\mathcal{M}(M)$: $$d(\mu, \nu):=\sum\limits_{k\geq 1}2^{-k}|\langle g_k, \mu-\nu\rangle|.$$ For any $x, y\in M$, define $D(x, y)=d(\delta_x, \delta_y),$ then it is equivalent to the original metric on $M$, so for convenience we just use this one  directly and still denote it by $d$.

Let $B(\mu, \zeta)$ be the closed ball centered at a measure $\mu$ with radius $\zeta$ in $\mathcal{M}(M)$.
\begin{lem}\label{measure-nbh} Suppose $(M, f)$ has the unstable almost product property for some blow up function $g$. Let $x_1,\cdots,x_k\in M$, $n_1\geq m(\epsilon_1), \cdots, n_k\geq m(\epsilon_k)$, $\zeta_1,\cdots, \zeta_k>0$, and $\epsilon_1>0, \cdots, \epsilon_k>0$ (small enough) be given. If we assume that $$\mathcal{E}_{n_i}(x_i)\in B(\mu_i, \zeta_i), \quad i=1,\cdots, k.$$ Then for any $y\in \bigcap\limits_{i=1}^kf^{-M_{i-1}}B^u_{n_i}(g, x_i, \epsilon_i)$ and any $\nu\in \mathcal{M}(M)$, one has $$d(\mathcal{E}_{M_k}(y), \nu)\leq \sum\limits_{i=1}^k\dfrac{n_i}{M_k}(\zeta_i'+d(\mu_i, \nu)),$$ where $M_i=n_1+\cdots+n_i$, and
$\zeta_i'=\zeta_i+\epsilon_i+g(n_i)/n_i.$
\end{lem}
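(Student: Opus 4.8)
The plan is to estimate the weak$^*$-distance $d(\mathcal{E}_{M_k}(y),\nu)$ by first decomposing the orbit segment of length $M_k$ starting at $y$ into the $k$ consecutive blocks $[M_{i-1}, M_i)$ dictated by the definition of the unstable almost product property. On the $i$-th block, the point $f^{M_{i-1}}(y)$ lies in $B^u_{n_i}(g, x_i, \epsilon_i)$, so for all but at most $g(n_i)$ indices $j \in \Lambda_{n_i}$ we have $d^u(f^j(f^{M_{i-1}}x_i), f^j(f^{M_{i-1}}y)) \leq \epsilon_i$; wait, more precisely $d^u(f^j x_i, f^{M_{i-1}+j} y)\le\epsilon_i$. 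First I would use convexity of $d$ (it is a genuine metric coming from a norm-like expression $\sum_k 2^{-k}|\langle g_k,\cdot\rangle|$, hence convex in each argument) together with the identity $\mathcal{E}_{M_k}(y) = \sum_{i=1}^k \frac{n_i}{M_k}\,\mathcal{E}_{n_i}\!\big(f^{M_{i-1}}(y)\big)$ to reduce, via the triangle inequality, to bounding $d\big(\mathcal{E}_{n_i}(f^{M_{i-1}}y), \nu\big)$ for each $i$, and this is in turn bounded by $d\big(\mathcal{E}_{n_i}(f^{M_{i-1}}y), \mathcal{E}_{n_i}(x_i)\big) + d(\mathcal{E}_{n_i}(x_i), \mu_i) + d(\mu_i,\nu)$. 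The middle term is at most $\zeta_i$ by hypothesis, and the last term is exactly $d(\mu_i,\nu)$; so everything reduces to showing $d\big(\mathcal{E}_{n_i}(f^{M_{i-1}}y), \mathcal{E}_{n_i}(x_i)\big) \leq \epsilon_i + g(n_i)/n_i$.

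For that last estimate I would again use convexity of $d$: since $\mathcal{E}_{n_i}(f^{M_{i-1}}y) = \frac{1}{n_i}\sum_{j=0}^{n_i-1}\delta_{f^{M_{i-1}+j}y}$ and $\mathcal{E}_{n_i}(x_i) = \frac{1}{n_i}\sum_{j=0}^{n_i-1}\delta_{f^j x_i}$, we get
$$d\big(\mathcal{E}_{n_i}(f^{M_{i-1}}y), \mathcal{E}_{n_i}(x_i)\big) \leq \frac{1}{n_i}\sum_{j=0}^{n_i-1} d\big(\delta_{f^{M_{i-1}+j}y}, \delta_{f^j x_i}\big) = \frac{1}{n_i}\sum_{j=0}^{n_i-1} D\big(f^{M_{i-1}+j}y, f^j x_i\big).$$
Now I split the index set $\Lambda_{n_i}$ into the "good" set $\Lambda$ on which $d^u(f^j x_i, f^{M_{i-1}+j}y)\le \epsilon_i$ and its complement of size at most $g(n_i)$. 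On the good indices, $D = d \leq d^u \leq \epsilon_i$ (the weak$^*$ metric $D$ is dominated by the Riemannian metric $d$ on $M$, which in turn is dominated by $d^u$ on a common unstable leaf — this should be recorded as a standing normalization, e.g. that the $g_k$ are $1$-Lipschitz so that $D(x,y)\le d(x,y)$, and points on the same local unstable manifold satisfy $d\le d^u$). On the bad indices, $D\le 1$ trivially since each $g_k$ takes values in $[0,1]$, so $|\langle g_k, \delta_x-\delta_y\rangle|\le 1$ and $D(x,y)\le\sum_k 2^{-k} = 1$. Hence the sum is at most $\frac{1}{n_i}\big((n_i - |\Lambda^c|)\epsilon_i + |\Lambda^c|\cdot 1\big) \leq \epsilon_i + g(n_i)/n_i$, as desired. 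Assembling the pieces yields
$$d(\mathcal{E}_{M_k}(y), \nu) \leq \sum_{i=1}^k \frac{n_i}{M_k}\Big(\epsilon_i + \frac{g(n_i)}{n_i} + \zeta_i + d(\mu_i,\nu)\Big) = \sum_{i=1}^k \frac{n_i}{M_k}\big(\zeta_i' + d(\mu_i,\nu)\big),$$
which is exactly the claimed bound.

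The only genuinely delicate point — and the one I would state carefully up front rather than bury in the computation — is the chain of metric comparisons $D \le d \le d^u$ and the bound $D \le 1$; everything else is the triangle inequality plus convexity of $d$ applied to the empirical-measure decomposition. In particular one must be careful that $f^{M_{i-1}}y$ and $x_i$ genuinely lie on a common unstable manifold so that $d^u$ is defined between them and controls $D$: this is guaranteed because $y \in f^{-M_{i-1}}B^u_{n_i}(g, x_i, \epsilon_i)$ forces $f^{M_{i-1}}y \in W^u(x_i)$. I expect no essential obstacle beyond bookkeeping; the $g(n_i)/n_i$ term is precisely the price paid for the $g$-blow-up in the almost product property, and it is harmless because $g(n_i)/n_i \to 0$.
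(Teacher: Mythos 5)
Your proposal is correct and follows essentially the same route as the paper: decompose $\mathcal{E}_{M_k}(y)$ into the block averages $\mathcal{E}_{n_i}(f^{M_{i-1}}y)$, use the triangle inequality against $\mathcal{E}_{n_i}(x_i)$, $\mu_i$, and $\nu$, and bound each block term by splitting indices into the good set (where $d\leq d^u\leq\epsilon_i$) and the at most $g(n_i)$ bad indices (where the metric is bounded by $1$). You merely make explicit two points the paper leaves implicit, namely the normalization $D\leq 1$ on bad indices and the comparison of $D$ with $d^u$ on a common unstable leaf, which is fine.
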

\begin{proof}
First we have $$\mathcal{E}_{M_k}(y)=\dfrac{1}{M_k}\sum_{i=1}^{M_k-1}\delta_{f^iy}=\sum_{j=1}^{k}\dfrac{n_j}{M_k}\mathcal{E}_{n_j}(f^{M_{j-1}}y).$$ Note that for $\delta>0$ small enough, there exists a $C>1$ such that for any $x\in M$, one has $$d(y, z)\leq d^u(y, z)\leq Cd(y, z)$$ for any $y, z\in\overline{W^u(x, \delta)}$. Hence
$$
\begin{aligned}
d(\mathcal{E}_{n_j}(x_j), \mathcal{E}_{n_j}(f^{-M_{j-1}}y))&\leq \dfrac{1}{n_j}\sum_{m=0}^{n_j-1}d(f^mx_j, f^{M_{j-1}+m}y)\\
&\leq \dfrac{g(n_j)}{n_j}+\epsilon_j.
\end{aligned}$$ Then by the triangle inequality, one has $$d(\mathcal{E}_{M_k}(y), \nu)\leq \sum_{j=1}^k\dfrac{n_j}{M_k}d(\mathcal{E}_{n_j}(f^{M_{j-1}}y), \nu)\leq \sum_{j=1}^k\dfrac{n_j}{M_k}(\zeta'_j+d(\mu_j, \nu)).$$
\end{proof}

Finally we gather several definitions of unstable topological pressure, one can also see them in \cite{Zhang1}.

\begin{defi}\label{Bowen pres}
For any $s\in \mathbb{R}$, $\delta>0$, $N\in \mathbb{N}$, $\epsilon>0$, $x\in M$, and any $Z\subseteq M$, set $$M^u(\mathcal{G}, s, N, \epsilon, Z, \overline{W^u(x, \delta)}):=\inf\limits_{\Gamma}\{\sum\limits_{i}{\exp(-sn_i+\sup\limits_{y\in B^u_{n_i}(x_i, \epsilon)}\log g_{n_i}(y))}\},$$ where $\Gamma$ runs over all countable open covers $\Gamma=\{B_{n_i}^u(x_i, \epsilon)\}_{i\in I}$ of $Z\cap \overline{W^u(x, \delta)}$ with each $n_i\geq N$.

Let $$m^u(\mathcal{G}, s, \epsilon, Z, \overline{W^u(x, \delta)}):=\lim\limits_{N\to\infty}M^u(\mathcal{G}, s, N, \epsilon, Z, \overline{W^u(x, \delta)}),$$
$$\begin{aligned}
P^u_{B}(f, \mathcal{G}, \epsilon, Z, \overline{W^u(x, \delta)})&:=\inf\{s\mid m^u(\mathcal{G}, s, \epsilon, Z, \overline{W^u(x, \delta)})=0\},\\
&:=\sup\{s\mid m^u(\mathcal{G}, s, \epsilon, Z, \overline{W^u(x, \delta)})=\infty\},\\
\end{aligned}$$ and
$$P^u_{B}(f, \mathcal{G}, Z, \overline{W^u(x, \delta)}):=\liminf \limits_{\epsilon\to 0}P^u_B(f, \mathcal{G}, \epsilon, Z, \overline{W^u(x, \delta)}),$$ then define $$P^u_B(f, \mathcal{G}, Z):=\lim\limits_{\delta\to 0}\sup\limits_{x\in M}P^u_B(f, \mathcal{G}, Z, \overline{W^u(x, \delta)}).$$
We call $P^u_B(f, \mathcal{G}, Z)$ the unstable Bowen topological pressure of $f$ on the subset $Z$ w.\,r.\,t.\,$\mathcal{G}$, and denote by $h^u_{B}(f, Z)$ the corresponding Bowen unstable topological entropy when the potentials vanish.
\end{defi}

\begin{defi}\label{Capacity-Top-Pressure} For any positive integer $n$, any $\epsilon>0$, any subset $Z\subseteq M$, any $\delta>0$, and any $x\in M$, set
$$\Lambda^u(\mathcal{G}, n, \epsilon, Z, \overline{W^u(x, \delta)}):=\inf\limits_{\Gamma}\{\sum\limits_{i}\sup\limits_{y\in B^u_{n_i}(x_i, \epsilon)}g_n(y)\},$$ where $\Gamma$ runs over all open covers $\Gamma=\{B_{n_i}^u(x_i, \epsilon)\}_{i\in I}$ of $\Z$ with $n_i=n$ for all $i$.

Then define $$\underline{CP}^u(f, \mathcal{G}, \epsilon, Z, \overline{W^u(x, \delta)}):=\liminf\limits_{n\to\infty}\frac{1}{n}\log \Lambda^u(\mathcal{G}, n, \epsilon, Z, \overline{W^u(x, \delta)}),$$
$$\overline{CP}^u(f, \mathcal{G}, \epsilon, Z, \overline{W^u(x, \delta)}):=\limsup\limits_{n\to\infty}\frac{1}{n}\log \Lambda^u(\mathcal{G}, n, \epsilon, Z, \overline{W^u(x, \delta)}),$$
$$\underline{CP}^u(f, \mathcal{G}, Z, \overline{W^u(x, \delta)}):=\liminf\limits_{\epsilon\to0}\underline{CP}^u(f, \mathcal{G}, \epsilon, Z, \overline{W^u(x, \delta)}),$$
$$\overline{CP}^u(f, \mathcal{G}, Z, \overline{W^u(x, \delta)}):=\liminf\limits_{\epsilon\to0}\overline{CP}^u(f, \mathcal{G}, \epsilon, Z, \overline{W^u(x, \delta)}).$$ Then the lower and upper capacity unstable topological pressures of $f$ on $Z$ w.\,r.\,t.\,$\mathcal{G}$ are defined by $$\underline{CP}^u(f, \mathcal{G}, Z):=\lim\limits_{\delta\to 0}\sup\limits_{x\in M}\underline{CP}^u(f, \mathcal{G}, Z, \overline{W^u(x, \delta)}),$$ and $$\overline{CP}^u(f, \mathcal{G}, Z):=\lim\limits_{\delta\to0}\sup\limits_{x\in M}\overline{CP}^u(f, \mathcal{G}, Z, \overline{W^u(x, \delta)}).$$

\end{defi}

\section{Unstable Pressure on Saturated Sets}

\begin{lem}\label{Inf entropy}
Let $(X, f)$ be a topological dynamical system. Suppose $\mu$ is an invariant Borel probability measure of $(X, f)$, and $\xi$ is a finite measurable partition of $X$. Then one has $$H_{\mu}(\xi)\leq \log 2+\mu(A)H_{\mu|_{A}}(\xi)+\mu(X\setminus A)H_{\mu|_{x\setminus A}}(\xi),$$ where $A$ is any Borel subset of $X$ and $\mu|_{(\cdot)}$ denotes the restriction of $\mu$.
\end{lem}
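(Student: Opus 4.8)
The plan is to reduce the inequality to the standard monotonicity and additivity properties of static (conditional) entropy for finite partitions; note that neither the invariance of $\mu$ nor the topological structure of $(X,f)$ actually enters, so this is a purely measure-theoretic statement about the finite partition $\xi$ and the Borel set $A$. Throughout I adopt the usual convention $0\log 0 = 0$ and interpret the normalized restriction $\mu|_{B}(\cdot) = \mu(\cdot\cap B)/\mu(B)$; if $\mu(A)=0$ or $\mu(A)=1$ the corresponding term on the right-hand side is declared to be $0$, so the degenerate cases are harmless.

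First I would introduce the (at most) two-element Borel partition $\beta := \{A,\, X\setminus A\}$ and use monotonicity of entropy under refinement to get $H_{\mu}(\xi)\le H_{\mu}(\xi\vee\beta)$. Next I would apply the additivity identity $H_{\mu}(\xi\vee\beta) = H_{\mu}(\beta) + H_{\mu}(\xi\mid\beta)$, together with the bound $H_{\mu}(\beta)\le\log 2$, which holds because $\beta$ has at most two atoms and the binary entropy function is maximized at $\log 2$.

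It then remains to identify $H_{\mu}(\xi\mid\beta)$ with the weighted sum appearing in the statement. Expanding the definition of conditional entropy,
$$H_{\mu}(\xi\mid\beta) = -\sum_{C\in\xi}\sum_{B\in\beta}\mu(C\cap B)\log\frac{\mu(C\cap B)}{\mu(B)} = \sum_{B\in\beta}\mu(B)\Bigl(-\sum_{C\in\xi}\frac{\mu(C\cap B)}{\mu(B)}\log\frac{\mu(C\cap B)}{\mu(B)}\Bigr),$$
and recognizing the inner sum as $H_{\mu|_{B}}(\xi)$, we obtain $H_{\mu}(\xi\mid\beta) = \mu(A)H_{\mu|_{A}}(\xi) + \mu(X\setminus A)H_{\mu|_{X\setminus A}}(\xi)$. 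Combining this with the previous two inequalities gives the claim.

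There is no genuine obstacle here: the argument is a two-line invocation of standard entropy inequalities. The only matters requiring a little care are the bookkeeping of the degenerate cases $\mu(A)\in\{0,1\}$ (absorbed by the convention above) and making sure the normalization used in $H_{\mu|_{A}}$ and $H_{\mu|_{X\setminus A}}$ is the probability normalization consistent with the rest of the paper.
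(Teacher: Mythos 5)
Your proof is correct: the paper in fact states this lemma without supplying a proof (it is a standard fact borrowed from the Pfister--Sullivan framework), and your argument via the two-set partition $\beta=\{A, X\setminus A\}$, monotonicity $H_{\mu}(\xi)\le H_{\mu}(\xi\vee\beta)$, the identity $H_{\mu}(\xi\vee\beta)=H_{\mu}(\beta)+H_{\mu}(\xi\mid\beta)$ with $H_{\mu}(\beta)\le\log 2$, and the identification of $H_{\mu}(\xi\mid\beta)$ with the weighted sum of entropies of the normalized restrictions is exactly the standard argument one would give. Your handling of the degenerate cases $\mu(A)\in\{0,1\}$ and the observation that invariance of $\mu$ plays no role are both appropriate.
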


The following result follows from a standard combinatorial argument, see also Lemma 2.4 of \cite{Sulli2} for a proof.
\begin{lem}\label{est}
Let $A$ be a finite set. For any word $\omega\in A^{\Lambda_n}$, if $0\leq \delta\leq \frac{|A|-1}{|A|}$, then $$|\{v\in A^{\Lambda_n}\mid d^H_n(v, \omega)\leq \delta n\}|\leq e^{n\varphi(\delta)}(|A|-1)^{n\delta},$$ where $\varphi(\delta)=-\delta\log \delta-(1-\delta)\log (1-\delta).$
\end{lem}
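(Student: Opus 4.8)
The plan is to reduce the statement to a volume bound for a Hamming ball and then run the standard weighting (entropy) argument. Write $q=|A|$ and $k=\lfloor\delta n\rfloor$. Since Hamming distance is translation invariant on $A^{\Lambda_n}$, the cardinality in question does not depend on $\omega$ and equals $N:=\sum_{j=0}^{k}\binom{n}{j}(q-1)^{j}$, the number of words at Hamming distance at most $\delta n$ from any fixed word. The cases $q=1$ and $\delta=0$ make this ball reduce to $\{\omega\}$ and are trivial, so from now on I assume $q\ge 2$ and $0<\delta\le (q-1)/q$.

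The weighting step: equip $A^{\Lambda_n}$ with the product probability measure $\mathbb{P}$ under which the coordinates are independent and the $i$-th coordinate equals $\omega_i$ with probability $1-\delta$ and equals each of the remaining $q-1$ symbols with probability $\delta/(q-1)$. Then a word $v$ with $d^H_n(v,\omega)=j$ satisfies $\mathbb{P}(\{v\})=(1-\delta)^{n-j}\bigl(\delta/(q-1)\bigr)^{j}$. The hypothesis $\delta\le (q-1)/q$ says exactly that $\delta/(q-1)\le 1-\delta$, so $j\mapsto(1-\delta)^{n-j}\bigl(\delta/(q-1)\bigr)^{j}$ is non-increasing; hence each of the $N$ words $v$ in the ball (those with $j\le k$) has $\mathbb{P}(\{v\})\ge(1-\delta)^{n-k}\bigl(\delta/(q-1)\bigr)^{k}$. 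Summing these disjoint singletons yields $1\ge N\,(1-\delta)^{n-k}\bigl(\delta/(q-1)\bigr)^{k}$, that is, $N\le(1-\delta)^{k-n}(q-1)^{k}\delta^{-k}$.

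It then remains to massage this into the asserted closed form. Taking logarithms and splitting $k-n=(k-\delta n)-(1-\delta)n$, a routine rearrangement gives $\log N\le n\bigl(\delta\log(q-1)+\varphi(\delta)\bigr)+(k-\delta n)\log\dfrac{(q-1)(1-\delta)}{\delta}$. Now $k=\lfloor\delta n\rfloor\le\delta n$ forces $k-\delta n\le 0$, and the hypothesis $\delta\le(q-1)/q$ gives $(q-1)(1-\delta)\ge\delta$, hence $\log\dfrac{(q-1)(1-\delta)}{\delta}\ge 0$; therefore the final summand is $\le 0$ and $N\le e^{n\varphi(\delta)}(q-1)^{n\delta}$, which is the claim.

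I do not anticipate a real obstacle; the only point deserving attention is that the hypothesis $\delta\le(q-1)/q$ is used twice — first to make $\mathbb{P}(\{v\})$ monotone in the Hamming distance, so that the least singleton weight over the ball occurs at $j=k$, and then to pin down the sign of $\log\frac{(q-1)(1-\delta)}{\delta}$ in the last estimate, where it combines with the floor bound $k\le\delta n$. Alternatively, one may skip the measure $\mathbb{P}$ and invoke the Chernoff-type inequality $N\le t^{-k}\bigl(1+(q-1)t\bigr)^{n}$, valid for every $t\in(0,1]$, specialized to $t=\delta/\bigl((q-1)(1-\delta)\bigr)$ (which lies in $(0,1]$ exactly under the stated hypothesis); this reproduces the same bound.
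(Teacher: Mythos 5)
Your proof is correct. Note that the paper itself does not prove this lemma at all: it is stated as following from ``a standard combinatorial argument'' with a pointer to Lemma 2.4 of \cite{Sulli2}, so there is no in-paper argument to compare against; what you have written is a complete, self-contained verification of exactly that standard estimate. Your route is the usual one: reduce, by symbol-permutation symmetry, to bounding $N=\sum_{j=0}^{\lfloor\delta n\rfloor}\binom{n}{j}(|A|-1)^{j}$, then weight $A^{\Lambda_n}$ with the product measure assigning probability $1-\delta$ to the symbol of $\omega$ and $\delta/(|A|-1)$ to each other symbol, and use that the singleton weight is non-increasing in the Hamming distance precisely because $\delta\leq\frac{|A|-1}{|A|}$ is equivalent to $\delta/(|A|-1)\leq 1-\delta$. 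The algebraic rearrangement is right: with $k=\lfloor\delta n\rfloor$ one gets $\log N\leq n\bigl(\varphi(\delta)+\delta\log(|A|-1)\bigr)+(k-\delta n)\log\frac{(|A|-1)(1-\delta)}{\delta}$, and the last term is $\leq 0$ since $k\leq\delta n$ while $(|A|-1)(1-\delta)\geq\delta$ under the same hypothesis; so the two appeals to $\delta\leq\frac{|A|-1}{|A|}$ occur exactly where they are needed. The degenerate cases ($\delta=0$, $|A|=1$, with the convention $0\log 0=0$ in $\varphi$) are handled, and your alternative Chernoff formulation $N\leq t^{-k}\bigl(1+(|A|-1)t\bigr)^{n}$ with $t=\delta/\bigl((|A|-1)(1-\delta)\bigr)$ is just a generating-function rephrasing of the same bound. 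No gaps.
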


The following proposition is a natural generalization of the unstable uniform separation property for unstable pressure. It originates from Theorem 3.1 of \cite{Sulli}.
\begin{prop}\label{first half for ergodic}
Let $f:M\rightarrow M$ be a $C^1$-smooth partially hyperbolic diffeomorphism  and $\mathcal{G}=\{\log g_n\}^{\infty}_{n=1}$ be a sequence of sub-additive potentials of $f$ on $M$. For any $\kappa>0$, there exists positive numbers $\rho^*>0$ and $\epsilon^*>0$ such that for any $\mu\in\mathcal{M}^e_{f}(M)$, any neighborhood $F$ of $\mu$ in $\mathcal{M}(M)$, and any $\delta>0$, there exists a subset $C$ of $M$ with $\mu(C)>0$ and a positive integer $n^*_{F, \mu, \kappa}$ such that for any $x\in C$, if $n\geq n^*_{F, \mu, \kappa}$, then one can find a $(n, \epsilon^*, \rho^*)$\,u-separated subset $\Gamma$ of $\overline{W^u(x, \delta)}\cap M_{n, F}$ such that $$\sum\limits_{y\in \Gamma}g_n(y)\geq e^{n\left(h^u_{\mu}(f)+\mathcal{G}_{*}(\mu)-\kappa\right)}.$$
\end{prop}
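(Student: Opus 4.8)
The plan is to deduce the sub-additive refinement of the unstable uniform separation property from the additive version recorded in Remark~\ref{sep} together with Lemma~\ref{est}. Fix $\kappa>0$. The idea is to first run the construction of Remark~\ref{sep} for the parameter $\kappa/2$ (or some smaller fraction of $\kappa$, to be adjusted later), obtaining $\rho^*,\epsilon^*>0$, a set $C$ with $\mu(C)>0$, and for each $x\in C$ and $n\geq N$ a $(\rho^*,n,\epsilon^*)\,u$-separated set $\Gamma_0\subseteq\overline{W^u(x,\delta)}\cap M_{n,F}$ with $\#\Gamma_0\geq e^{n(h^u_\mu(f)-\kappa/2)}$. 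The issue is that this only controls cardinality, not the sum $\sum_{y\in\Gamma}g_n(y)$, so we need to pass to a sub-collection on which $g_n$ is essentially constant and comparable to $e^{n\mathcal{G}_*(\mu)}$.

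The key step is a pigeonhole/level-set argument. By the Sub-additive Ergodic Theorem (stated in the Remark after Definition~\ref{pot}), for $\mu$-a.e.\ $x$ we have $\frac1n\log g_n(x)\to\mathcal{G}_*(\mu)$; by Egorov's theorem this convergence is uniform on a subset of positive measure, which we intersect with $C$ (renaming it $C$). Thus, enlarging $N$ if necessary, for $x\in C$ and $n\geq N$ we have $\frac1n\log g_n(x)\geq\mathcal{G}_*(\mu)-\kappa/4$. However, this bounds $g_n$ only at the center $x$, not at the separated points $y\in\Gamma_0$. To fix this, one combines two observations: first, the points $y$ lie in $M_{n,F}$, so their empirical measures are close to $\mu$; second — and this is where one must be careful — one wants $\frac1n\log g_n(y)$ close to $\mathcal{G}_*(\mu)$ for most $y\in\Gamma_0$. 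I would handle this by partitioning $\Gamma_0$ according to the value of $\lfloor\frac{1}{\ell}\log g_n(y)\rfloor$ over a suitable range: since $g_n$ is continuous on the compact manifold, $\log g_n$ is bounded below by $-cn$ and above by $cn$ for some constant $c$ (using sub-additivity and continuity), so $\frac1n\log g_n(y)\in[-c,c]$ for all $y$, and this interval can be partitioned into $O(n)$ slots of width $\kappa/4$. Pigeonhole then yields a sub-collection $\Gamma\subseteq\Gamma_0$ with $\#\Gamma\geq \#\Gamma_0/(O(n))$ on which $\log g_n$ varies by at most $n\kappa/4$; absorbing the polynomial factor $O(n)$ into $e^{n\kappa/4}$ (valid for $n$ large) and using $\min_{y\in\Gamma}g_n(y)$ as the common lower bound, we get
$$\sum_{y\in\Gamma}g_n(y)\geq \#\Gamma\cdot\min_{y\in\Gamma}g_n(y)\geq e^{n(h^u_\mu(f)-\kappa/2)}\,e^{-n\kappa/4}\,\min_{y\in\Gamma}g_n(y).$$

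The remaining task is to show $\min_{y\in\Gamma}g_n(y)\geq e^{n(\mathcal{G}_*(\mu)-\kappa/4)}$ — or rather, to adjust the slot-selection so that the retained $\Gamma$ consists of points with large $g_n$. Here the cleanest route is: among the $O(n)$ slots, the total cardinality is $\#\Gamma_0$, and the slots with $\frac1n\log g_n(y)<\mathcal{G}_*(\mu)-\kappa/4$ cannot collectively contain "too many" points. To justify this, I would use the sub-additive integral convergence plus a Markov-type estimate, or more directly invoke that the set $\{y : \frac1n\log g_n(y)<\mathcal{G}_*(\mu)-\kappa/4\}\cap M_{n,F}$ is dynamically small — but in fact the simplest fix is to note we may choose $C$ and enlarge $N$ so that the separated set produced by Remark~\ref{sep} can itself be taken inside the Egorov set, since the construction in \cite{Tian} builds $\Gamma_0$ from a set of positive measure; then every $y\in\Gamma_0$ satisfies $\frac1n\log g_n(y)\geq\mathcal{G}_*(\mu)-\kappa/4$ directly, and no pigeonhole on $g_n$-values is needed at all. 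I expect the main obstacle to be exactly this point: verifying that the separated set of \cite{Tian} (Theorem~D there) can be arranged to lie in a prescribed positive-measure "good set" for the sub-additive Birkhoff averages, rather than being an arbitrary separated set in $M_{n,F}$. If the cited construction does not transparently allow this, the pigeonhole argument of the previous paragraph, combined with Lemma~\ref{est} to control how the $\rho^*$-separation degrades when passing to sub-collections, is the fallback. Once $\#\Gamma\geq e^{n(h^u_\mu(f)-\kappa/2)}e^{-n\kappa/4}$ and $g_n\geq e^{n(\mathcal{G}_*(\mu)-\kappa/4)}$ on $\Gamma$ are both in hand, multiplying gives $\sum_{y\in\Gamma}g_n(y)\geq e^{n(h^u_\mu(f)+\mathcal{G}_*(\mu)-\kappa)}$, and $\Gamma$ is still $(\rho^*/2,n,\epsilon^*)\,u$-separated (or we rename $\rho^*$), completing the proof.
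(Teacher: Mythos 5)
Your first route is indeed the right idea, but the step you flag as ``the main obstacle'' --- arranging the separated set to lie inside a prescribed positive-measure good set for the Sub-additive Ergodic Theorem --- is precisely the content of the paper's proof, and you leave it unverified. The paper does not use Tian's Theorem D (Remark \ref{sep}) as a black box: it reruns the Pfister--Sullivan-type construction from scratch. It defines $E_n=\{y\mid N(y,\kappa')\le n\}$, where $\frac1m\log g_m(y)>\mathcal{G}_*(\mu)-\kappa'/2$ for $m>N(y,\kappa')$, sets $V_n=M_{n,F}\cap Y_n\cap E_n$ (of measure close to $1$), uses Lemma \ref{Inf entropy} to show that on a set $C$ of positive $\mu$-measure the conditional entropy satisfies $H_{\mu^\eta_x|_{V_n}}((\alpha')_0^{n-1})\ge n(h^u_\mu(f)-\kappa'/2)$, and then uses the coding of orbits by the partition $\alpha'$ together with Lemma \ref{est} (a bound on Hamming balls) to extract from $V_n\cap\eta(x)$ a $(\rho^*,n,\epsilon^*)$ u-separated set of cardinality at least $e^{n(h^u_\mu(f)-\kappa/2)}$; since its points can be chosen in $E_n$, they automatically satisfy $g_n(y)\ge e^{n(\mathcal{G}_*(\mu)-\kappa/2)}$, and the product gives the claimed sum. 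So your route A is completable, but only by redoing the entropy/separation construction with $E_n$ built in, not by citing the cardinality statement and intersecting afterwards; your Egorov set at the center $x$ is irrelevant, as you yourself note.

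Your fallback (pigeonhole on the values of $\frac1n\log g_n$ over $\Gamma_0$) has a genuine gap and cannot repair this. For sub-additive potentials, membership in $M_{n,F}$ gives no lower bound on $\frac1n\log g_n(y)$ (unlike the additive case, where $\frac1nS_n\phi(y)=\int\phi\,d\mathcal{E}_n(y)$ is controlled by the empirical measure), and $\Gamma_0$ is a finite separated set carrying no measure, so no Markov-type or ``dynamically small'' estimate applies: a priori every point of $\Gamma_0$ could satisfy $g_n(y)\ll e^{n\mathcal{G}_*(\mu)}$, in which case no sub-collection has the required sum. Moreover, sub-additivity gives only the upper bound $\log g_n\le n\max\log g_1$; the uniform lower bound $\log g_n\ge -cn$ that you use to get $O(n)$ slots need not hold (and $\mathcal{G}_*(\mu)$ may even be $-\infty$ for other measures). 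Finally, Lemma \ref{est} plays no role in ``controlling how the separation degrades under passing to sub-collections'' --- a subset of a $(\rho^*,n,\epsilon^*)$ u-separated set is trivially still separated; in the paper that lemma is used in the opposite direction, to bound the number of words in a Hamming ball so that exponentially many pairwise Hamming-far words (hence separated orbit points) survive.
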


\begin{proof}
By Remark \ref{rem}, for any $\mu\in \mathcal{M}^e_{f}(M)$, any $\alpha\in\mathcal{P}_{\epsilon/2}$ where $\epsilon>0$ is small, and any $\eta\in \mathcal{P}^u$, one has $$h^u_{\mu}(f)=h_{\mu}(f, \alpha|\eta).$$

Assume $\alpha=\{A_1, \cdots, A_m\}$. For any $\kappa>0$, choose $0<\kappa'<\kappa$ and $\rho^*>0$ with $2\kappa'+\rho^*<1/2$, and also satisfy $$\phi(\rho^*+2\kappa')+(\rho^*+2\kappa')\log(2m-1)<\kappa-\kappa',$$ where $\phi(x)=-x\log x-(1-x)\log (1-x).$

For any $\nu\in \mathcal{M}_f(M)$, we shall construct a neighborhood $W_{\nu}$ of $\nu$ in $\mathcal{M}(M)$ as follows. Since $\nu$ is regular, for each $1\leq i\leq m$, choose a compact subset $B_i\subseteq A_i$ with $$\nu(A_i\setminus B_i)<\dfrac{\kappa'}{4m\log 2m}.$$ Take $\epsilon^*>0$ such that $d(y, z)>2\epsilon^*$ if $y\in B_i, z\in B_j, i\neq j$. Choose a positive integer $n_1$ such that for any $n>n_1$, one has $$\dfrac{\kappa'}{4\log m}\geq \dfrac{\log2}{n\log 2m}.$$ For each $1\leq i\leq m$, take an open neighborhood $U_i\supset B_i$ with $diam(U_i)<\epsilon$ (where $\epsilon$ is as in the beginning), and $d(y, z)>\epsilon^*$ whenever $y\in U_i, z\in U_j, i\neq j.$ Set $K=M\setminus \bigcup\limits_{i}U_i$, then $K$ is closed. So the characteristic function $\chi_{K}$ is upper semi-continuous. Define $$W_{\nu}=\{\lambda\in \mathcal{M}(M)\mid
\int_{M}\chi_{K}d\lambda\leq \int_M\chi_Kd\nu+\dfrac{\kappa'}{4\log 2m}\}.$$ Then it is enough to prove the statement for any ergodic measure $\mu\in W_{\nu}\cap \mathcal{M}_f(M)$. 

Let $\alpha'$ be the finite partition of $M$ which consists of all $U_i, 1\leq i\leq m$ and all $A_i\setminus\bigcup_jU_j, 1\leq i\leq m$, note that $diam(\alpha')<\epsilon$ as above. Label the atoms of $\bigvee\limits_{i=0}^{n-1}f^{-i}\alpha'$ by words of length n over an alphabet $A$ of at most $2m$ letters. The letter $1, 2, ..., m$ indicate the member $U_1, ..., U_m$ of $\alpha'$, and the others indicate all non-empty $A_1\setminus \bigcup_jU_j,..., A_m\setminus \bigcup_jU_j$. Define $\omega: M\rightarrow A^{\Lambda_n}$ as follows: $$\omega(x)(i)=j\;\text{if}\; f^i(x)\;\text{is in the atom of}\;\alpha'\;\text{labeled by}\, j.$$

Now we prove the result for any ergodic measure $\mu\in W_{\nu}$, any neighborhood $F$ of $\mu$, and any $\delta>0$. Choose $\eta\in\mathcal{P}^u$ such that $\eta(x)\subseteq W^u(x, \delta)$ for almost every $x$. Since $\alpha'\in\mathcal{P}_{\epsilon}$, based on Remark \ref{rem}, one can find a positive integer $n_2$ such that if $n\geq n_2$, then $$H_{\mu}((\alpha')^{n-1}_{0}\mid\eta)>n(h^u_{\mu}(f)-\kappa'/4),\quad\quad(1).$$ By the definition of $W_{\nu}$, one has $$\mu(K)\leq \nu(K)+\dfrac{\kappa'}{4\log 2m},$$ while $$\nu(K)= \nu(M\setminus \bigcup\limits_{i}B_i)\leq \nu(\bigcup\limits_{i}A_i\setminus B_i).$$ Hence $$\mu(K)\leq \sum\limits_{i=1}^m\nu(A_i\setminus B_i)+\dfrac{\kappa'}{4\log 2m}\leq \dfrac{\kappa'}{2\log 2m}.$$

Set $$Y_n=\{x\in M\mid |\{i\in \Lambda_n\mid \omega(x)(i)>m)\}|\leq n\kappa'\},$$ then by Birkhoff's Ergodic Theorem, one has $\lim\limits_{n\to\infty}\mu(Y_n)=1$. Note that we also have $\lim\limits_{n\to\infty}\mu(M_{n, F})=1$.
On the other hand, by the Sub-additive Ergodic Theorem, for almost all $y\in M$, there exists an $N(y, \kappa')$ such that if $n>N(y, \kappa')$, one has $$\dfrac{1}{n}\log g_n(y)>\mathcal{G}_{*}(\mu)-\kappa'/2,\quad\quad \mu\text{-}a.e.\,y.$$ Put $$E_n=\{y\in M\mid N(y, \kappa')\leq n\},$$ then $E_n\subseteq E_{n+1}$ and $\lim\limits_{n\to\infty}\mu(E_n)=1.$
Set $V_n=M_{n,F}\cap Y_n\cap E_n$, then there exists an $n_3$ such that if $n\geq n_3$, then $$\mu(M\setminus V_n)<\dfrac{\kappa'}{4\log 2m}-\dfrac{\log 2}{n\log2m}.$$

Let $n^*_{F, \mu, \kappa}=\max\{n_1, n_2, n_3\}$, and for $n\geq n^*_{F, \mu, \kappa}$, consider the partition $\beta_n=\{V_n, M\setminus V_n\}$. Recall the measure disintegration of $\mu$ over $\eta\in\mathcal{P}^u$: $$\mu(B)=\int_M\mu^\eta_x(B)d\mu(x)\;\text{for every measurable set}\;B.$$
We claim that there exist a subset $C$ of $M$ with $\mu(C)>0$ such that for any $x\in C$, one has $$H_{\mu^{\eta}_{x}|_{V_n}}((\alpha')^{n-1}_{0})\geq n(h^u_{\mu}(f)-\kappa'/2),   \quad\quad(2).$$
Otherwise for $\mu$-a.e.\,$x\in M$, one has the contrary of the inequality above. By the definition of conditional measure, one has $$H_{\mu}((\alpha')^{n-1}_0\mid\eta)=\int_{M}H_{\mu^{\eta}_{x}}((\alpha')^{n-1}_{0})d\mu(x),$$ applying Lemma \ref{Inf entropy} for $\mu^{\eta}_x|_{V_n}$, one gets  $$
\begin{aligned}
&H_{\mu}((\alpha')^{n-1}_0\mid\eta)\\
\leq& \log 2+\int_M\left(\mu^{\eta}_x(V_n)H_{\mu^{\eta}_x|_{V_n}}((\alpha')^{n-1}_0)+\mu^{\eta}_x(M\setminus V_n)H_{\mu^{\eta}_x|_{M\setminus V_n}}((\alpha')^{n-1}_0)\right)d\mu(x)\\
\leq&\log 2+\int_M\left(\mu^{\eta}_x(V_n)n(h^u_{\mu}(f)-\kappa'/2)+\mu^{\eta}_x(M\setminus V_n)n\log 2m\right)d\mu(x)\\
\leq&\log 2+\mu(V_n)n(h^u_{\mu}(f)-\kappa'/2)+\mu(M\setminus V_n)n\log 2m.
\end{aligned}$$ By (1) before, we have $$n(h^u_{\mu}(f)-\kappa'/4)<\log 2+\mu(V_n)n(h^u_{\mu}(f)-\kappa'/2)+\mu(M\setminus V_n)n\log 2m.$$ While $$\mu(M\setminus V_n)<\dfrac{\kappa'}{4\log 2m}-\dfrac{\log 2}{n\log2m},$$ then one gets a contradiction.

For any $x\in C$, let $E_n(x)$ denote the image of $V_n\cap \eta(x)$ by the map $\omega$. By (2), we have $$|E_n(x)|\geq e^{n(h^u_{\mu}(f)-\kappa'/2)}.$$ Then the Hamming distance $d^{H}_n(\omega, \omega')$ of two words $\omega$ and $\omega'$ in $A^{\Lambda_n}$ is the number of different letters of them. Suppose $E'_n(x)\subseteq E_n(x)$ is a set of maximal cardinality such that $d^{H}_n(\omega, \omega')>n(2\kappa'+\rho^*)$ for any two different words in $E'_n(x)$. Let $\Gamma_n(x)$ be the set obtained by taking exactly one point from the atom of $(\alpha')^{n-1}_0$ which is labeled by a word in $E'_n(x)$. It is easy to see that $\Gamma_n(x)\subseteq M_{n, F}\cap \eta(x)$ is a $(\rho^*, n, \epsilon^*)\,u$-separated set. The maximal cardinality of $E'_n(x)$ implies that for any $\omega\in E_n(x)$, there exists $\omega'\in E'_n(x)$ such that $d^{H}_n(\omega, \omega')\leq n(2\kappa'+\rho^*)$. Note that for any $\omega\in A^{\Lambda_n}$, by Lemma \ref{est}, one has $$|\{\omega'\in A^{\Lambda_n}\mid d^H_n(\omega', \omega)\leq n(2\kappa'+\rho^*)\}|\leq e^{n\phi(2\kappa'+\rho^*)}(|A|-1)^{n(2\kappa'+\rho^*)}.$$ Hence by the choice of $\kappa'$ and $\rho^*$, one has $$
\begin{aligned}
|\Gamma_n(x)|=&|E'_n(x)|\\
\geq&\dfrac{e^{n(h^u_{\mu}(f)-\kappa'/2)}}{e^{n\phi(2\kappa'+\rho^*)}(2m-1)^{n(2\kappa'+\rho^*)}}\\
\geq&e^{n(h^u_{\mu}(f)-\kappa/2)}.\\
\end{aligned}$$ 
Hence $$\sum_{y\in\Gamma_n(x)}g_n(y)\geq |\Gamma_n(x)|e^{n(\mathcal{G}_{*}(\mu)-\kappa/2)}\geq e^{n(h^u_{\mu}(f)+\mathcal{G}_{*}(\mu)-\kappa)}.$$
\end{proof}

\begin{coro}\label{inv}
Let $\mathcal{G}=\{\log g_n\}_n$ be a sequence of sub-additive potentials of $f$. Suppose $\mathcal{G}_{*}(\mu)$ is lower semi-continuous with respect to $\mu$, and the ergodic measures of $(M, f)$ are unstable entropy dense. Then for any $\kappa>0$, there exists a $\rho^*>0$ and $\epsilon^*>0$ such that for any $\mu\in\mathcal{M}_{f}(M)$, any neighborhood $F$ of $\mu$ in $\mathcal{M}(M)$ and any $\delta>0$, there exists a set $C\subseteq M$ and a positive integer $n^*_{F, \mu, \kappa}$ such that for any $x\in C$, if $n\geq n^*_{F, \mu, \kappa}$, then $$P^u(F, \mathcal{G}, n, \epsilon^*,  \rho^*, x, \delta)\geq e^{n(h^u_{\mu}(f)+\mathcal{G}_{*}(\mu)-\kappa)}.$$
\end{coro}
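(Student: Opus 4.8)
The plan is to deduce Corollary~\ref{inv} from Proposition~\ref{first half for ergodic} by a standard approximation argument, moving from ergodic measures to arbitrary invariant measures via unstable entropy density and the lower semi-continuity of $\mathcal{G}_*$. The quantities $\rho^*$ and $\epsilon^*$ are exactly those provided by the proposition (for, say, the tolerance $\kappa/2$), and will not depend on $\mu$.

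First I would fix $\mu\in\mathcal{M}_f(M)$, a neighborhood $F$ of $\mu$, and $\delta>0$. If $h^u_\mu(f)+\mathcal{G}_*(\mu)=-\infty$ there is nothing to prove (interpreting the bound trivially), so assume it is finite and pick any $h^*<h^u_\mu(f)$ with $h^*+\mathcal{G}_*(\mu)>h^u_\mu(f)+\mathcal{G}_*(\mu)-\kappa/4$. By lower semi-continuity of $\mathcal{G}_*$, shrink $F$ to a smaller neighborhood $F'\subseteq F$ of $\mu$ so that $\mathcal{G}_*(\nu)>\mathcal{G}_*(\mu)-\kappa/4$ for all $\nu\in F'\cap\mathcal{M}_f(M)$. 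By unstable entropy density of ergodic measures, choose an ergodic $\mu'\in F'$ with $h^u_{\mu'}(f)>h^*$; then $h^u_{\mu'}(f)+\mathcal{G}_*(\mu')>h^u_\mu(f)+\mathcal{G}_*(\mu)-\kappa/2$. Now I would apply Proposition~\ref{first half for ergodic} to the ergodic measure $\mu'$, with the neighborhood $F'$ (which is itself a neighborhood of $\mu'$) and the same $\delta$, and tolerance $\kappa/2$: this produces a set $C\subseteq M$ with $\mu'(C)>0$ and an integer $n^*_{F',\mu',\kappa/2}$ such that for $x\in C$ and $n\geq n^*_{F',\mu',\kappa/2}$ there is an $(n,\epsilon^*,\rho^*)$\,$u$-separated subset $\Gamma\subseteq\overline{W^u(x,\delta)}\cap M_{n,F'}\subseteq\overline{W^u(x,\delta)}\cap M_{n,F}$ with $\sum_{y\in\Gamma}g_n(y)\geq e^{n(h^u_{\mu'}(f)+\mathcal{G}_*(\mu')-\kappa/2)}\geq e^{n(h^u_\mu(f)+\mathcal{G}_*(\mu)-\kappa)}$.

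It remains to translate this into the claimed lower bound for $P^u(F,\mathcal{G},n,\epsilon^*,\rho^*,x,\delta)$. Here I would invoke the definition of $P^u(F,\mathcal{G},n,\epsilon^*,\rho^*,x,\delta)$ (the supremum of $\sum_{y\in\Gamma}g_n(y)$ over $(\rho^*,n,\epsilon^*)$\,$u$-separated subsets $\Gamma$ of $\overline{W^u(x,\delta)}\cap M_{n,F}$): the set $\Gamma$ just constructed is admissible, so $P^u(F,\mathcal{G},n,\epsilon^*,\rho^*,x,\delta)\geq e^{n(h^u_\mu(f)+\mathcal{G}_*(\mu)-\kappa)}$, which is what we want. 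Setting $n^*_{F,\mu,\kappa}:=n^*_{F',\mu',\kappa/2}$ and keeping the set $C$ above finishes the argument; note $C$ has positive measure only for $\mu'$, but the statement of the corollary as written only requires a nonempty $C$ (indeed $\mu'(C)>0$ gives $C\neq\emptyset$), so this is fine.

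The only genuinely delicate point is bookkeeping the order of quantifiers: one must choose the smaller neighborhood $F'$ and the approximating ergodic measure $\mu'$ \emph{before} invoking the proposition, and check that $M_{n,F'}\subseteq M_{n,F}$ so that separated sets inside $\overline{W^u(x,\delta)}\cap M_{n,F'}$ remain valid competitors for the pressure defined using $F$. Everything else—the two applications of lower semi-continuity and entropy density, and the combination of the entropy and Lyapunov-exponent errors into a single $\kappa$—is routine. If the paper's convention is that $P^u(F,\mathcal{G},n,\epsilon^*,\rho^*,x,\delta)$ denotes the partition-function-type supremum over $u$-separated sets (rather than an infimum over covers), the last step is immediate; otherwise a short duality remark relating the two would be inserted here.
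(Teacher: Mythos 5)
Your proposal is correct and follows essentially the same route as the paper: shrink the neighborhood using lower semi-continuity of $\mathcal{G}_*$, pick an ergodic $\mu'$ nearby via unstable entropy density with $h^u_{\mu'}(f)+\mathcal{G}_*(\mu')$ within $\kappa/2$ of $h^u_{\mu}(f)+\mathcal{G}_*(\mu)$, apply Proposition \ref{first half for ergodic} to $\mu'$ on the intersected neighborhood, and conclude by monotonicity of $P^u$ in $F$. The extra bookkeeping you supply (the inclusion $M_{n,F'}\subseteq M_{n,F}$, the reading of $P^u$ as a supremum over $(\rho^*,n,\epsilon^*)$\,$u$-separated sets, and that $C$ need only be nonempty) matches the paper's intent, with only immaterial differences in the choice of error constants.
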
\begin{proof}
If $\mu$ is ergodic, then it is done by Proposition \ref{first half for ergodic}. So we prove it for non-ergodic $\mu$. Since $\mathcal{G}_*(\mu)$ is lower semi-continuous, for any $\kappa>0$, there exists a neighborhood of $\mu$, say $F_1$, such that for any $\nu\in F_1$, one has $$\mathcal{G}_*(\nu)>\mathcal{G}_*(\mu)-\kappa/3.$$
On the other hand, by assumption there is an ergodic $\mu'\in F\cap F_1$ such that $$h^u_{\mu'}(f)>h^u_{\mu}(f)-\kappa/3.$$

By Proposition \ref{first half for ergodic}, there exist $\rho^*>0$ and $\epsilon^*>0$ such that for any $\delta>0$, there is a subset $C$ with $\mu'(C)>0$ and a positive integer $N$ such that if $n\geq N$, then $$P^u(F_1\cap F, \mathcal{G}, n, \epsilon^*,  \rho^*, x, \delta)\geq e^{n(h^u_{\mu'}(f)+\mathcal{G}_{*}(\mu')-\kappa/3)}\quad\quad \forall x\in C.$$ Then $$P^u(F, \mathcal{G}, n, \epsilon^*,  \rho^*, x, \delta)\geq e^{n(h^u_{\mu}(f)+\mathcal{G}_{*}(\mu)-\kappa)}, \quad\quad \forall x\in C.$$
\end{proof}

\begin{rem}
It is well known that $\mathcal{G}_*(\mu)$ is upper semi-continuous, so the assumption in the corollary above really refers to the case where $\mathcal{G}_*(\mu)$ is continuous. We put it in this way here just to indicate we only need the lower semi-continuity.
\end{rem}

X. Tian showed the unstable uniform separation property in \cite{Tian}; actually the unstable almost product property implies that  the statement could pass to general invariant measures as follows.

\begin{thm}\label{uni sep}
Let $f: M\rightarrow M$ be a $C^1$-smooth partially hyperbolic diffeomorphism and suppose $(M, f)$ satisfies unstable almost product property for some blow up function $g$. Then for any $\kappa>0$, there exists a $\rho^*>0$ and an $\epsilon^*>0$ satisfying that for any $\mu\in \mathcal{M}_f(M)$, any neighborhood $F\subseteq \mathcal{M}(M)$ of $\mu$, and any $\delta>0$, there is a subset $C\subseteq M$ and a positive integer $n^*_{F,\,\mu,\,\kappa}$ such that for any $x\in C$, if $n\geq n^*_{F,\,\mu,\,\kappa}$, then there exists an $(\rho^*, n, \epsilon^*)\,u$-separated set of $\overline{W^u(x, \delta)}\cap M_{n, F}$ such that its cardinality is greater than or equal to $e^{n(h^u_{\mu}(f)-\kappa)}.$
\end{thm}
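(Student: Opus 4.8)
The plan is to deduce the statement for an arbitrary invariant measure from Tian's ergodic result recorded in Remark \ref{sep}, the bridge being the fact that the unstable almost product property forces the ergodic measures of $(M,f)$ to be \emph{unstable entropy dense}. Granting this density, the argument becomes a copy of the proof of Corollary \ref{inv} applied to the trivial potential sequence $g_n\equiv 1$, for which $\mathcal{G}_*(\mu)\equiv 0$ is trivially lower semi-continuous and $\sum_{y\in\Gamma}g_n(y)=|\Gamma|$; thus the real work is to establish entropy density from the almost product property. I would isolate this as a lemma: \emph{if $(M,f)$ has the unstable almost product property for some blow up function $g$, then the ergodic measures are unstable entropy dense}, the unstable counterpart of the entropy-density statement of Pfister and Sullivan \cite{Sulli}.

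To prove that lemma, let $\nu\in\mathcal{M}_f(M)$, let $F$ be a weak$^*$ neighborhood of $\nu$, and let $h^*<h^u_\nu(f)$. First I would use the ergodic decomposition together with the affinity of the unstable metric entropy $\mu\mapsto h^u_\mu(f)$ (cf.\ \cite{Hu1}) to select finitely many ergodic measures $\nu_1,\dots,\nu_k$ and weights $\lambda_1,\dots,\lambda_k>0$ with $\sum_i\lambda_i=1$, with $\sum_i\lambda_i\nu_i\in F$, and with $\sum_i\lambda_i h^u_{\nu_i}(f)>h^*$. Fixing a small $\kappa>0$, Remark \ref{sep} supplies $\rho^*,\epsilon^*>0$ (depending only on $\kappa$) so that, for each $i$ and all large $n$, there is a $(\rho^*,n,\epsilon^*)\,u$-separated subset of $\overline{W^u(\cdot,\delta)}\cap M_{n,F_i}$ of cardinality $\geq e^{n(h^u_{\nu_i}(f)-\kappa)}$, where $F_i$ is a small neighborhood of $\nu_i$. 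Next I would concatenate orbit segments taken from these families, using the ergodic index $i$ with frequency close to $\lambda_i$: the unstable almost product property (Definition \ref{un prod}) guarantees that for each admissible choice of one point from each block there is a point $y\in\bigcap_j f^{-M_{j-1}}B^u_{n_j}(g,\cdot,\epsilon_j)$, and Lemma \ref{measure-nbh} shows that, once the block lengths, the $\epsilon_j$, and the ratios $g(n_j)/n_j$ are small enough, the empirical measure $\mathcal{E}_{N}(y)$ with $N=\sum_j n_j$ lies in $F$. A Hamming-distance count in the spirit of the proof of Proposition \ref{first half for ergodic}, using Lemma \ref{est}, then shows that among these glued points there is a $(\rho^*,N,\epsilon^*)\,u$-separated family of cardinality at least $e^{Nh^*}$ once $N$ is large and the parameters are chosen small enough. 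Finally, carrying out this construction along a sequence of scales $N_1<N_2<\cdots$ and extracting a weak$^*$ limit of the measures equidistributed along the glued points --- equivalently, running a Moran-type construction together with the mass distribution principle --- produces an invariant measure supported near $\nu$ with unstable metric entropy at least $h^*$; passing to its ergodic decomposition yields an ergodic measure in $F$ with unstable entropy $>h^*$, as required.

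With entropy density in hand I would finish as in Corollary \ref{inv}. Fix $\kappa>0$ and let $\rho^*,\epsilon^*>0$ be the constants furnished by Tian's ergodic theorem (Remark \ref{sep}) for tolerance $\kappa/2$; they depend on $\kappa$ only. Given a (possibly non-ergodic) $\mu\in\mathcal{M}_f(M)$, a neighborhood $F$ of $\mu$, and $\delta>0$, entropy density yields an ergodic $\mu'\in F$ with $h^u_{\mu'}(f)>h^u_\mu(f)-\kappa/2$; pick a neighborhood $F'\subseteq F$ of $\mu'$. Applying Tian's result to $\mu'$, $F'$, and $\delta$ produces a set $C\subseteq M$ and a positive integer $N$ such that for every $x\in C$ and every $n\geq N$ there is a $(\rho^*,n,\epsilon^*)\,u$-separated subset of $\overline{W^u(x,\delta)}\cap M_{n,F'}\subseteq\overline{W^u(x,\delta)}\cap M_{n,F}$ of cardinality $\geq e^{n(h^u_{\mu'}(f)-\kappa/2)}\geq e^{n(h^u_\mu(f)-\kappa)}$. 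Taking $n^*_{F,\mu,\kappa}=N$ finishes the proof.

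The main obstacle is the entropy-density lemma, and within it two points deserve care: first, that $\rho^*$ and $\epsilon^*$ can be chosen depending on $\kappa$ only and uniformly over all ergodic components, so that the concatenation does not spoil the separation scales --- this uniformity is already present in the statements of Tian's theorem and of the almost product property; second, and more delicate, the step from an exponentially large family of pairwise $u$-separated orbit segments with empirical measures near $\nu$ to an honest ergodic measure attaining the entropy bound, which requires a multi-scale construction plus the mass distribution principle rather than a single step. One also needs the affinity of the unstable metric entropy, which can be quoted from \cite{Hu1}. It is precisely to sidestep a further difficulty that I route the argument through the space of measures: the unstable blow up ball $B^u_n(g,x,\epsilon)$ need not be contained in a small unstable disc about $x$ (a bad time may occur at $0$), so points produced by a crude direct concatenation need not lie in $\overline{W^u(x,\delta)}$; by contrast, in the reduction above the separated set is handed to us by Tian's construction already inside $\overline{W^u(x,\delta)}$.
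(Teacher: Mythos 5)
Your route differs from the paper's: the paper handles a non-ergodic $\mu$ directly, by taking finitely many ergodic components $\nu_i$ from the ergodic decomposition, invoking Tian's result (Remark \ref{sep}) for each $\nu_i$ with slightly larger separation constants, and then gluing the resulting orbit segments with the unstable almost product property, checking by hand that the glued branch sets are $(\rho^*,n,\epsilon^*)\,u$-separated, lie in $\overline{W^u(x,\delta)}$, and have empirical measures in $F$; unstable entropy density (Corollary \ref{un dense}) is then deduced downstream, via Theorem \ref{subset} and Proposition \ref{VP}. You instead propose to prove entropy density first and then reduce to the ergodic case; the reduction itself (your last paragraph, mirroring Corollary \ref{inv} with trivial potentials) is correct. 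But this places the whole burden on your entropy-density lemma, which in the paper is a consequence of the very theorem you are proving, so it must be established independently --- and your sketch of it has a genuine gap.

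The gap is the final passage from the exponentially large glued $u$-separated families with controlled empirical measures to an ergodic measure in $F$ with $h^u>h^*$. A weak$^*$ limit of measures equidistributed on the glued points need not have unstable entropy at least $h^*$, since entropy is not lower semi-continuous; this step requires a Misiurewicz-type argument or a variational principle, not a one-line limit. The Moran/mass-distribution alternative only bounds a Bowen-type entropy of the glued set, which is not $f$-invariant, so Proposition \ref{VP} does not apply to it. Most importantly, ``passing to its ergodic decomposition yields an ergodic measure in $F$'' is false in general: the ergodic components of an invariant measure lying in $F$ need not lie in $F$, and the component carrying the entropy may be precisely one outside $F$. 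The correct route (Pfister--Sullivan, and the paper's Theorem \ref{subset} plus Proposition \ref{VP}) is to build a compact $f$-invariant set $M'$ all of whose points have empirical measures eventually in a smaller neighborhood $F'$ with $\overline{F'}\subseteq F$, bound $h^u_{UC}(f,M')$ from below by the separated-set count, and apply the variational principle on $M'$; the resulting ergodic measure lies in $F$ because its generic points lie in $M'$, not because of an ergodic decomposition of a nearby measure. Supplying this amounts to carrying out the paper's gluing construction plus the invariant-set step, so nothing is saved; note also that the bad-time-at-$0$ subtlety you hoped to sidestep (a blow-up ball $B^u_n(g,x,\epsilon)$ escaping $\overline{W^u(x,\delta)}$) reappears inside your own construction when you must produce separated sets in $\overline{W^u(x,\delta)}\cap M'$.
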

\begin{proof} If $\mu$ is ergodic, then the conclusion is verified by the first part of Theorem D in \cite{Tian}.

We only need to prove the case when $\mu\in \mathcal{M}_f(M)$ is not ergodic. By ergodic decomposition, we have $$h^{u}_{\mu}(f)=\int_{\mathcal{M}^{e}_f(M)}h^{u}_{\nu}(f)d\tau(\nu),$$ where $\tau$ is a probability measure supported on $\mathcal{M}^{e}_f(M)$. Then there exist $r$ numbers of $a_i\in (0,1)$ with $\sum\limits^{r}_{i=1}a_i=1$ and ergodic measures $\nu_i$ such that $$\nu:=\sum\limits^{r}_{i=1}a_i\nu_i\in F\;\;\text{and}\;\;h^{u}_{\nu}(f)\geq h^{u}_{\mu}(f)-\frac{\kappa}{3}.$$ Without loss of generality, we may assume $r=2$ and $a_1=a_2=\frac{1}{2}$. The proof for the general case is very similar. Take two neighborhoods $F_i$ of $\nu_i$ such that $\frac{1}{2}\mu_1+\frac{1}{2}\mu_2\in F$ for any $\mu_i\in F_i,\,i=1,2.$ By Theorem D in \cite{Tian} (actually its proof, see Remark \ref{sep}), for $\nu_i\in \mathcal{M}^{e}_f(M)$, $i=1, 2$, any $\delta_1>0$ and $\delta=\delta_2$ above, there exists a set $C_i\subset M$ with $\nu_i(C_i)>0$ and $n_i\in \mathbb{N}$ such that for any $x_i\in C_i$ and $n> n_i$, one can construct a $(2\rho^{*},n,2\epsilon^{*})\,u$-separated set $\Gamma_i$ of $\overline{W^u(x_i,\delta_i)}\cap M_{n,F_i}$ with cardinality $|\Gamma_i|\geq e^{n(h^{u}_{\nu_i}(f)-\frac{\kappa}{3})}$. Take $n^*\geq \max\{n_1,n_2\}$ and also satisfies \begin{equation}\label{g-almost} n^*>m(\epsilon),\,2\rho^*n^*>4g(n^*)+1\;\;\text{and}\;\;\frac{g(n^*)}{n^*}<\epsilon, \end{equation} where $m(\epsilon)$ is the data in the definition of unstable almost product property and $\epsilon<\frac{\epsilon^*}{4}$.

Next we will define the sequences $\{n'_j\},\,\{\epsilon'_j\},\,\{C'_j\},\,\{F'_j\}$ and $\{\Gamma'_j\}$ by setting $$n'_j:=n^*,\;\epsilon'_j:=\epsilon,\;C'_j:=C_{(j\,\text{mod}\,2)+1},\;F'_j:=F_{(j\,\text{mod}\,2)+1},\;\Gamma'_j:=\Gamma_{(j\,\text{mod}\,2)+1},\;$$ $\delta'_j:=\delta_{(j\,\text{mod}\,2)+1}$ and $x'_j:=x_{(j\,\text{mod}\,2)+1}.$ Set$$G_k:=\bigcap\limits^{k}_{j=1}\left(\bigcup\limits_{y_j\in \Gamma'_j}f^{-M_{j-1}}B_{n'_j}^{u}(g, y_j, \epsilon'_j)\right),$$ where $M_{j-1}:=\sum\limits^{j-1}_{l=1}n'_l=(j-1)n^*, j\geq2$, and $M_0:=0$.
Then $G_k$ is a non-empty closed set. We can label each set obtained by developing this formula by the branches of a labeled tree of height $k$, of which a branch is labeled by $(y_1,\cdots,y_k)$ with $y_j\in \Gamma'_j$. By unstable almost product property for $g$, the set $$G_{(y_1,\cdots,y_k)}=\bigcap\limits^{k}_{j=1}\left(f^{-M_{j-1}}B_{n'_j}^{u}(g,y_j,\epsilon'_j)\right)$$ is contained in $W^{u}(y_1)$. Since $\Gamma'_1\subseteq\overline{W^u(x'_1,\delta'_{1})}\cap M_{n^*,F'_1}$, one has $W^u(y_1)=W^u(x'_1)$ and  $G_k\subseteq W^u(x'_1, \delta'_1)$.

Set $G:=\bigcap\limits^{\infty}_{k=1}G_k$. We claim that $G$ is a closed set, which is the disjoint union of non-empty closed sets of form $G_{(y_1,\cdots, y_k,\cdots)}$, which are called branch sets. More precisely, suppose $G_{(y_1,\cdots, y_k, \cdots)}$ and $G_{(z_1,\cdots, z_2, \cdots)}$ are two branch sets labeled by two different sequences $(y_1,y_2, ,\cdots)$ and $(z_1,z_2,\cdots)$, respectively. Take $y_j, z_j\in\Gamma'_j$ with $y_j\neq z_j$. For any $y\in B^{u}_{n^*}(g, y_j, \epsilon'_j)$ and $z\in B^{u}_{n^*}(g, z_j, \epsilon'_j)$, since $y_j$ and $z_j$ are $(2\rho^*,n^*,2\epsilon^*)\,u$-separated and (\ref{g-almost}) holds, one can find an $0\leq m\leq n^*-1$ satisfying $$d^u(f^{m}y_j, f^{m}z_j)\geq 2\epsilon^*,\;\;d^u(f^{m}y_j, f^{m}y)\leq\epsilon'_j\;\text{and}\;d^u(f^{m}z_j, f^{m}z)\leq\epsilon'_j.$$ By the triangle inequality, one has
$$\begin{aligned}d^u(f^{m}y, f^{m}z)&\geq d^u(f^{m}y_j, f^{m}z_j)-d^u(f^{m}y_j, f^{m}y)-d^u(f^{m}z, f^{m}z_j)\\&\geq\epsilon^*-2\epsilon>\epsilon^*.\\
\end{aligned}$$
Moreover, by (\ref{g-almost}) the number of such $m$ is more than $2\rho^*n^*-g(n^*)\geq 2\rho^*n^*-\epsilon n^*$. In a word, two different sequences label two different branch sets, and any two points in different branch sets are $(\rho^*,n,\epsilon^*)\,u$-separated provided that $\epsilon<\rho^*$ is small enough and $n>n^*$ is large enough.

To get the desired result, we also need to show that for any $y\in G$, one has $\mathcal{E}_n(y)\in F$. For any $n$ large enough, suppose $m=[\frac{n}{2 n^*}]$. Then $$\mathcal{E}_{n}(y)=\frac{2mn^*}{n}\mathcal{E}_{2mn^*}(y)+\frac{n-2mn^*}{n}\mathcal{E}_{n-2mn^*}(f^{2mn^{*}}y),$$ and 
$$\begin{aligned}
&d\left(\mathcal{E}_{n}(y), \mathcal{E}_{2mn^*}(y)\right)\\
=&\|\frac{n-2mn^*}{n}\left(\mathcal{E}_{n-2mn^*}(f^{2mn^{*}}y)-\mathcal{E}_{2mn^*}(y)\right)\|\leq \frac{2(n-2mn^*)}{n},
\end{aligned}$$ which can be arbitrary small as $n$ goes large enough. 

Note that $$\mathcal{E}_{2mn^*}(y)=\sum\limits^{2m-1}_{j=0}\frac{n^*}{2mn^*}\mathcal{E}_{n^*}(f^{jn^*}y).$$ Suppose the first $2m$ labels of $y$ are $(y_1, \cdots, y_{2m})$, which implies $y\in \bigcap\limits^{2m}_{j=1}\left(f^{-(j-1)n^*}B_{n^*}^{u}(g;y_j,\epsilon'_j)\right)$. Then for any $0\leq j\leq 2m-1$, one has $$\begin{aligned}
d\left(\mathcal{E}_{n^*}(f^{jn^*}y),\mathcal{E}_{n^*}(y_j)\right)&\leq\frac{1}{n^*}\sum\limits_{m=0}^{n^*-1}d^u\left(f^m(f^{jn^*}y),f^m(y^j)\right)\\&\leq\frac{g(n^*)}{n^*}+\epsilon\frac{n^*-g(n^*)}{n^*}\leq 2\epsilon.
\end{aligned}$$
So $\mathcal{E}_{n^*}(f^{jn^*}y)\in B\left(\mathcal{E}_{n^*}(y_j),2\epsilon\right)\subseteq F'_j$. On the other hand, if we adjust the summation order of $\mathcal{E}_{2mn^*}(y)$ as follows,$$\begin{aligned}
\mathcal{E}_{2mn^*}(y)&=\frac{1}{2m}\left(\mathcal{E}_{n^*}(y)+\mathcal{E}_{n^*}(f^{2n^*}y)+\cdots+\mathcal{E}_{n^*}(f^{2(m-1)n^*}y)\right)\\
&+\frac{1}{2m}\left(\mathcal{E}_{n^*}(f^{n^*}y)+\mathcal{E}_{n^*}(f^{3n^*}y)+\cdots+\mathcal{E}_{n^*}(f^{(2m-1)n^*}y)\right)\\
&=\frac{1}{2}\left(\frac{1}{m}\left(\mathcal{E}_{n^*}(y)+\mathcal{E}_{n^*}(f^{2n^*}y)+\cdots+\mathcal{E}_{n^*}(f^{2(m-1)n^*}y)\right)\right)\\
&+\frac{1}{2}\left(\frac{1}{m}\left(\mathcal{E}_{n^*}(f^{n^*}y)+\mathcal{E}_{n^*}(f^{3n^*}y)+\cdots+\mathcal{E}_{n^*}(f^{(2m-1)n^*}y)\right)\right),
\end{aligned}$$
then $\mathcal{E}_{2mn^*}(y)\in F$. If $n$ is large enough, then $(n-2mn^*)/n$ is small enough , which implies $\mathcal{E}_n(y)\in F.$

Set $C=C_2$, for any point $x'_1(=x_2)\in C$, we have found a $(\rho^*, n, \epsilon^*)\,u$-separated set 
of $\overline{W^u(x'_1, \delta)}\cap M_{n,F}$, whose cardinality is greater than or equal to $\prod\limits^{2m}_{j=1}|\Gamma'_j|$. Furthermore,
$$\begin{aligned}
\prod\limits^{2m}_{j=1}|\Gamma'_j|&\geq \left(e^{n^*(h^{u}_{\nu_1}(f)-\frac{\kappa}{3})}e^{n^*(h^{u}_{\nu_2}(f)-\frac{\kappa}{3})}\right)^{m}\\
&\geq e^{2n^*m\left(\frac{1}{2}h^{u}_{\nu_1}(f)+\frac{1}{2}h^{u}_{\nu_2}(f)-\frac{\kappa}{3}\right)}\\
&\geq e^{2n^*m\left(h^{u}_{\nu}(f)-\frac{\kappa}{3}\right)}\geq e^{(n-2n^*)\left(h^{u}_{\mu}(f)-\frac{2\kappa}{3}\right)}\geq e^{n\left(h^{u}_{\mu}(f)-\kappa\right)}.
\end{aligned}$$
\end{proof}

In \cite{Tian}, the authors introduced the concept of unstable upper capacity entropy on an arbitrary subset $Z$ of $M$ with respect to $f$:  $$h^{u}_{UC}(f,Z)=\lim\limits_{\delta\rightarrow 0}\sup\limits_{x\in M}\lim\limits_{\epsilon\rightarrow0}h_{UC}(f,\overline{W^{u}(x,\delta)}\cap Z),$$ where 
$$h_{UC}(f,\overline{W^{u}(x,\delta)}\cap Z)=\limsup\limits_{n\rightarrow\infty}\dfrac{1}{n}\log N^{u}(Z, \epsilon, n, x, \delta),$$ and $N^{u}(Z, \epsilon, n, x, \delta)$ is the maximal cardinality of an $(n,\epsilon) \,u$-separated set of $\overline{W^{u}(x,\delta)}\cap Z$, see Proposition 2.4 and Lemma 2.5 of \cite{Tian}. They also obtained the following variational principle.
\begin{prop}\label{VP}(Theorem 2.11 in \cite{Tian})
Let $f: M\rightarrow M$ be a $C^1$-smooth partially hyperbolic diffeomorphism and $Z\subseteq M$ be a compact $f$-invariant  subset. Then
$$\begin{aligned}h^{u}_{UC}(f,Z)&=\sup\{h^{u}_{\mu}(f)\mid\mu\in\mathcal{M}_f(M)\,\text{and}\; \mu(Z)=1\}\\
&=\sup\{h^{u}_{\mu}(f)\mid\mu\in\mathcal{M}^{e}_f(M)\,\text{and}\; \mu(Z)=1\}.\end{aligned}.$$
\end{prop}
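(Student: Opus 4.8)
The plan is to sandwich $h^u_{UC}(f,Z)$ between $\sup\{h^u_\nu(f)\mid\nu\in\mathcal{M}^e_f(M),\ \nu(Z)=1\}$ from below and $\sup\{h^u_\mu(f)\mid\mu\in\mathcal{M}_f(M),\ \mu(Z)=1\}$ from above, and then to observe that these two outer expressions coincide, which yields both displayed equalities at once. The coincidence is the ergodic-decomposition step: for an invariant $\mu$ with $\mu(Z)=1$, write $\mu=\int\nu\,d\tau(\nu)$ with $\tau$ supported on $\mathcal{M}^e_f(M)$; since $Z$ is closed and $f$-invariant, $\nu(Z)=1$ holds for $\tau$-a.e.\ $\nu$, and by affinity of $\mu\mapsto h^u_\mu(f)$ (see \cite{Hu1}) one has $h^u_\mu(f)=\int h^u_\nu(f)\,d\tau(\nu)$, so the supremum over all invariant measures carried by $Z$ equals the supremum over the ergodic ones.

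For the lower bound I would invoke Proposition~\ref{first half for ergodic} with the trivial potential ($g_n\equiv1$, so $\mathcal{G}_*\equiv0$), whose conclusion then reads $|\Gamma|\ge e^{n(h^u_\nu(f)-\kappa)}$. Fix an ergodic $\nu$ with $\nu(Z)=1$ and $\kappa>0$; the proposition produces $\rho^*,\epsilon^*>0$ such that for any neighbourhood $F$ of $\nu$ and any $\delta>0$ there is a set $C$ with $\nu(C)>0$ so that for $x\in C$ and $n$ large one obtains an $(n,\epsilon^*,\rho^*)\,u$-separated subset of $\overline{W^u(x,\delta)}\cap M_{n,F}$ of cardinality at least $e^{n(h^u_\nu(f)-\kappa)}$; such a set is a fortiori $(n,\epsilon^*)\,u$-separated. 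To land inside $Z$ one re-reads that proof with the auxiliary set $V_n$ replaced by $V_n\cap Z$: since $\nu(Z)=1$ forces $\nu^\eta_x(Z)=1$ for a.e.\ $x$, the measure estimates are unchanged, and the separated set can then be chosen inside $V_n\cap\eta(x)\cap Z\subseteq\overline{W^u(x,\delta)}\cap Z$. Hence $N^u(Z,\epsilon^*,n,x,\delta)\ge e^{n(h^u_\nu(f)-\kappa)}$ for $x$ ranging over a non-empty set and all large $n$; letting $n\to\infty$, then $\epsilon\to0$ (the limit being monotone), then $\sup_x$, then $\delta\to0$, and finally $\kappa\to0$ gives $h^u_{UC}(f,Z)\ge h^u_\nu(f)$.

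For the upper bound I would run the Misiurewicz construction. Fix $\epsilon,\delta>0$ and $x\in M$; for each $n$ choose a maximal $(n,\epsilon)\,u$-separated set $E_n\subseteq\overline{W^u(x,\delta)}\cap Z$, set $\sigma_n:=\frac{1}{|E_n|}\sum_{y\in E_n}\delta_y$ and $\mu_n:=\frac1n\sum_{k=0}^{n-1}f^k_*\sigma_n$, and pass to a subsequence $n_j$ along which $\frac{1}{n_j}\log|E_{n_j}|\to\limsup_n\frac1n\log N^u(Z,\epsilon,n,x,\delta)$ and $\mu_{n_j}\to\mu$ weak$^*$; then $\mu\in\mathcal{M}_f(M)$, and since $E_n\subseteq Z$ with $Z$ closed and $f$-invariant, $\mu(Z)=1$. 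Choose a finite partition $\alpha\in\mathcal{P}$ with $\mu(\partial\alpha)=0$ and $d^u$-diameter below $\epsilon$; then each atom of $\alpha_0^{n-1}$ meets $\overline{W^u(x,\delta)}$ in a set of $d^u_n$-diameter less than $\epsilon$, hence contains at most one point of $E_n$, so $\log|E_n|=H_{\sigma_n}(\alpha_0^{n-1})$. Since $\sigma_n$ is carried by the single leaf-piece $\overline{W^u(x,\delta)}$, picking $\eta\in\mathcal{P}^u$ built from a partition whose atom through $x$ contains this piece gives $H_{\sigma_n}(\alpha_0^{n-1})=H_{\sigma_n}(\alpha_0^{n-1}\mid\eta)$; the standard blockwise sub-additivity and concavity of entropy, combined with $\mu(\partial\alpha)=0$, $\mu_{n_j}\to\mu$, and the conditional-entropy machinery of \cite{Hu1} (which lets one replace the measurable partition $\eta$ by finite refinements), then yield $\limsup_j\frac1{n_j}\log|E_{n_j}|\le h_\mu(f,\alpha\mid\eta)\le h^u_\mu(f)$. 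Taking $\sup_x$, then $\delta\to0$ (the right-hand supremum being independent of $x$ and $\delta$), and then $\epsilon\to0$ gives $h^u_{UC}(f,Z)\le\sup\{h^u_\mu(f)\mid\mu(Z)=1\}$.

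The main obstacle is the upper bound, and within it the step that produces the \emph{unstable} metric entropy $h_\mu(f,\alpha\mid\eta)$ rather than the ordinary metric entropy $h_\mu(f,\alpha)$: one has to exploit that all points of $E_n$ sit on a single unstable leaf, so that the relevant quantity is the conditional entropy given a partition subordinate to unstable manifolds, and then push this conditioning through the weak$^*$ limit --- which is delicate because $\eta$ is merely a measurable, not a finite, partition. This is precisely the technical core of \cite{Tian} (following the methods of \cite{Hu1}); the remaining ingredients, namely the ergodic-decomposition reduction and the various limiting arguments, are routine.
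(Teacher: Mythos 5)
The paper does not actually prove this proposition: it is quoted verbatim as Theorem 2.11 of \cite{Tian}, so there is no internal argument of the paper to compare yours against. Judged on its own, your outline follows the route one would expect (and essentially the route of \cite{Tian}, which builds on \cite{Hu1}): the ergodic-decomposition step is correct, since $\mu(Z)=1$ with $Z$ closed and invariant gives $\nu(Z)=1$ for a.e.\ ergodic component and the affinity of $\mu\mapsto h^u_\mu(f)$ is available from \cite{Hu1}; and your lower bound is sound, because in the proof of Proposition~\ref{first half for ergodic} (equivalently Theorem D of \cite{Tian} in the entropy case) one may replace $V_n$ by $V_n\cap Z$ without changing any estimate, as $\nu(Z)=1$ forces $\nu^{\eta}_x(Z)=1$ for a.e.\ $x$, and the representative points can then be chosen in $Z\cap\eta(x)$.

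The genuine gap is where you yourself point: in the upper bound, the step from $\frac{1}{n_j}\log|E_{n_j}|$ to $h_\mu(f,\alpha\mid\eta)\le h^u_\mu(f)$. The classical Misiurewicz scheme only delivers the unconditioned bound $\limsup_j\frac{1}{n_j}\log|E_{n_j}|\le h_\mu(f,\alpha)$, because it rests on upper semicontinuity of $\nu\mapsto H_\nu(\alpha_0^{N-1})$ at a measure with $\mu(\partial\alpha)=0$; the conditional quantity $\nu\mapsto H_\nu(\alpha_0^{N-1}\mid\eta)$ enjoys no such semicontinuity when $\eta$ is a merely measurable partition whose conditional measures do not vary continuously, and the identity $H_{\sigma_n}(\alpha_0^{n-1})=H_{\sigma_n}(\alpha_0^{n-1}\mid\eta)$ for the leaf-supported measures $\sigma_n$ does not by itself transfer to the averaged measures $\mu_{n_j}$ or to the limit $\mu$. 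Your appeal to ``the conditional-entropy machinery of \cite{Hu1}'' at precisely this point is a citation of the theorem's real content rather than an argument; carrying it out (approximating $\eta$ by suitable finite partitions, controlling the error uniformly in $n$, and verifying that the chosen $\eta$ is indeed subordinate to unstable manifolds with respect to the limit measure $\mu$, not just adapted to the fixed leaf piece $\overline{W^u(x,\delta)}$) is exactly the technical core of \cite{Hu1} and \cite{Tian}. So the proposal is a correct road map with the decisive step left unproved, which is acceptable here only because the paper itself treats the statement as an external result.
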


\begin{rem}
Theorem \ref{uni sep} is an improved analog of Proposition 2.1 in \cite{Sulli2} in the following sense: 1) the current uniformness for measures follows from the (unstable) uniform separation property of Definition \ref{uni sep def}; 2) (unstable) almost product property leads to the fact that the statement could hold for general invariant measures. With Theorem \ref{uni sep}, mimicking the proof of Proposition 2.3 in \cite{Sulli2}, one can obtain the following result. (We omit the proof since it is quite similar.)
\end{rem}

\begin{thm}\label{subset}
Let $f: M\rightarrow M$ be a $C^1$-smooth partially hyperbolic diffeomorphism. If $(M, f)$ satisfies unstable almost product property for some blow up function $g$. Then for any $\kappa>0$, there exists a $\rho^*>0$ and an $\epsilon^*>0$ satisfying that for any $\mu\in \mathcal{M}_f(M)$, any neighborhood $F\subseteq \mathcal{M}(M)$ of $\mu$, and any $\delta>0$, there is a closed $f$-invariant $M'\subseteq M$ satisfying the following properties:

1) There exists a positive integer $n'$ such that $\mathcal{E}_n(y)\in F$ for any $n\geq n'$ and any $y\in M'.$

2) There is a subset $C\subseteq M$ and a positive integer $n''$ such that for any $x\in C$ and any $n\geq n''$, there exists an $(\rho^*, n, \epsilon^*)\,u$-separated set of $\overline{W^u(x, \delta)}\cap M'$ whose cardinality is greater than or equal to $e^{n(h^u_{\mu}(f)-\kappa)}.$
\end{thm}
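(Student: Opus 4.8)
The plan is not to run a fresh construction, but to \emph{reuse} the closed set built inside the proof of Theorem~\ref{uni sep} and to turn it into an $f$-invariant set by a \emph{bounded} number of iterations.

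Fix $\kappa>0$ and let $\rho^*,\epsilon^*$ be the constants from Theorem~\ref{uni sep}, depending on $\kappa$ alone. Given $\mu\in\mathcal M_f(M)$, a neighborhood $F$ of $\mu$, and $\delta>0$, first fix a neighborhood $F_0$ of $\mu$ with $B(F_0,\eta_0)\subseteq F$ for some $\eta_0>0$, and a compact set $C$ of positive measure inside the positive-measure set denoted $C_2$ in the proof of Theorem~\ref{uni sep}. For each $x\in C$, running that proof with target $F_0$ and with $x$ as distinguished base point produces: a block length $n^*$ and thresholds $n_0,n''$ \emph{independent of} $x$; $2$-periodic families $\Gamma'_j=\Gamma'_j(x)$ that are $(2\rho^*,n^*,2\epsilon^*)\,u$-separated and lie in $M_{n^*,F'_j}$, with $F'_j$ a fixed small neighborhood of $\nu_{(j\bmod 2)+1}$ (here $\{\nu_1,\nu_2\}$ are the two ergodic measures used to approximate $\mu$); and the closed set
\[
G_x=\bigcap_{k\ge1}\ \bigcap_{j=1}^{k}\ \bigcup_{y_j\in\Gamma'_j(x)}f^{-(j-1)n^*}B^u_{n^*}(g,y_j,\epsilon)\ \subseteq\ \overline{W^u(x,\delta)} .
\]
The proof of Theorem~\ref{uni sep} already gives the two facts I need (the first after choosing the auxiliary data $F'_j,\epsilon$ small enough, which is what forces $F_0\Subset F$): $\mathcal E_n(w)\in F_0$ for every $w\in G_x$ and every $n\ge n_0$; and for every $n\ge n''$ the set $\overline{W^u(x,\delta)}\cap G_x$ contains an $(\rho^*,n,\epsilon^*)\,u$-separated subset of cardinality at least $e^{n(h^u_\mu(f)-\kappa)}$.

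The new ingredient is the observation $f^{2n^*}(G_x)\subseteq G_x$: since $\Gamma'_{j+2}(x)=\Gamma'_j(x)$, a coding $(y_j)_{j\ge1}$ witnessing $w\in G_x$ gives the coding $(y_{j+2})_{j\ge1}$ witnessing $f^{2n^*}w\in G_x$. Hence $\bigcup_{i\ge0}f^i(G_x)=\bigcup_{r=0}^{2n^*-1}f^r(G_x)$ is a finite union of compacta, and I would set $M':=\overline{\bigcup_{x\in C}\bigcup_{r=0}^{2n^*-1}f^r(G_x)}$. Then $M'$ is closed and $f(M')\subseteq M'$, so it is $f$-invariant in the forward sense, which suffices for the applications since every $f$-invariant measure carried by $M'$ is carried by the genuinely invariant set $\bigcap_{k\ge0}f^k(M')$. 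Property~2 is then immediate: $G_x\subseteq M'$, so the separated set above already sits in $\overline{W^u(x,\delta)}\cap M'$. For Property~1, let $y\in M'$; if $y=f^r(w)$ with $w\in G_x$ for some $x\in C$ and $0\le r<2n^*$, then from $\mathcal E_n(f^rw)=\tfrac{n+r}{n}\mathcal E_{n+r}(w)-\tfrac1n\sum_{i=0}^{r-1}\delta_{f^iw}$ one gets $d(\mathcal E_n(y),\mathcal E_{n+r}(w))=\tfrac rn\,d(\mathcal E_{n+r}(w),\mathcal E_r(w))\le r/n<2n^*/n$, so $\mathcal E_n(y)\in B(F_0,2n^*/n)$ once $n\ge n_0$; for a general $y\in M'$ the same holds with $F_0$ enlarged to $\overline{F_0}$ by continuity of $\mathcal E_n$. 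Choosing $n'\ge n_0$ with $2n^*/n'<\eta_0/2$ yields $\mathcal E_n(y)\in F$ for all $n\ge n'$ and all $y\in M'$.

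The crux --- and the only place something could go wrong --- is the claim that $\bigcup_{i\ge0}f^i(G_x)$ is a \emph{bounded} union of translates of $G_x$, which is precisely what keeps Property~1 cheap; it hinges on $n^*$ being one fixed integer and on the $2$-periodicity of the coding families, both genuine features of the construction in Theorem~\ref{uni sep}. If the underlying construction instead used growing block lengths, Property~1 would require the ``long balanced window'' estimate --- a window $[J,J+n)$ along the orbit of a point of $G_x$ meets $\Theta(n/n^*)$ full blocks, which strictly alternate between the two ergodic types and so average, up to $O(n^*/n)$, to $\nu=\tfrac12\nu_1+\tfrac12\nu_2\in F_0$, plus two boundary blocks of total weight $<2n^*/n$ --- after which one shrinks the target to $F_0\Subset F$ and passes to the closure; I would keep this argument in reserve. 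Two routine matters remain: checking that the union over $x$ may be taken over a compact $C$ so that the closure does not spoil Property~1 (it does not, as $n_0$ and $n''$ are uniform in $x$), and replacing ``$2$-periodic'' by ``$q$-periodic'', $q$ a common denominator of the ergodic-decomposition weights $a_i$, for the general non-two-component case --- exactly as in the proof of Theorem~\ref{uni sep}.
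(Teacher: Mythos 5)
Your argument is essentially the proof the paper intends: the paper omits it, stating that Theorem \ref{subset} follows from Theorem \ref{uni sep} by mimicking Proposition 2.3 of \cite{Sulli2}, and that is exactly your construction --- the fixed block length $n^*$, the periodicity observation $f^{2n^*}(G_x)\subseteq G_x$ coming from the $2$-periodic families $\Gamma'_j$, the resulting finite union $M'=\overline{\bigcup_{x\in C}\bigcup_{r=0}^{2n^*-1}f^r(G_x)}$, and the bounded-shift estimate $d\bigl(\mathcal{E}_n(f^rw),\mathcal{E}_{n+r}(w)\bigr)\le r/n$ after shrinking $F$ to $F_0$, with the separation and cardinality bounds inherited verbatim from the proof of Theorem \ref{uni sep}. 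The one point worth making explicit (which you already flag) is that this yields only forward invariance $f(M')\subseteq M'$ rather than $f(M')=M'$; this is what the Pfister--Sullivan construction itself produces, so ``$f$-invariant'' in the statement should be read in that sense, and forward invariance is what the subsequent use in Corollary \ref{un dense} via Proposition \ref{VP} relies on.
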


\begin{coro}\label{un dense}
Let $f: M\rightarrow M$ be a $C^1$-smooth partially hyperbolic diffeomorphism. If $(M, f)$ satisfies unstable almost product property for some blow up function $g$, then ergodic measures of $(M, f)$ are unstable entropy dense.
\end{coro}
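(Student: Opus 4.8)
The plan is to deduce unstable entropy density directly from Theorem~\ref{subset} together with the variational principle for the unstable upper capacity entropy, Proposition~\ref{VP}. Fix $\nu\in\mathcal{M}_f(M)$, a neighborhood $F$ of $\nu$, and a number $h^*<h^u_\nu(f)$. Since a smaller neighborhood suffices, I would first replace $F$ by a closed metric ball $B(\nu,r)\subseteq F$, so we may assume $F$ is closed; then choose $\kappa>0$ with $h^u_\nu(f)-\kappa>h^*$. Applying Theorem~\ref{subset} to the measure $\nu$, the neighborhood $F$, and any fixed $\delta>0$ produces $\rho^*,\epsilon^*>0$ and a closed $f$-invariant set $M'\subseteq M$ enjoying: (1) $\mathcal{E}_n(y)\in F$ for every $y\in M'$ and every $n$ beyond some $n'$; and (2) there are a non-empty set $C\subseteq M$ and an integer $n''$ such that, for each $x\in C$ and $n\geq n''$, the slice $\overline{W^u(x,\delta)}\cap M'$ contains a $(\rho^*,n,\epsilon^*)\,u$-separated set of cardinality at least $e^{n(h^u_\nu(f)-\kappa)}$.

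The next step is to upgrade (2) to the estimate $h^u_{UC}(f,M')\geq h^u_\nu(f)-\kappa$. Every $(\rho^*,n,\epsilon^*)\,u$-separated set is a fortiori $(n,\epsilon^*)\,u$-separated, so (2) gives $N^u(M',\epsilon^*,n,x,\delta)\geq e^{n(h^u_\nu(f)-\kappa)}$ for $x\in C$ and $n\geq n''$. Letting $n\to\infty$ and then $\epsilon\to 0$ (using that $N^u$ is non-decreasing as $\epsilon$ shrinks), then taking the supremum over $x\in M$, and finally using that these local capacity quantities do not depend on the leaf-radius $\delta$ (cf.\ Proposition~2.4 and Lemma~2.5 of \cite{Tian}), one arrives at $h^u_{UC}(f,M')\geq h^u_\nu(f)-\kappa>h^*$.

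Since $M'$ is a compact $f$-invariant subset of $M$, Proposition~\ref{VP} applies and gives $\sup\{h^u_\mu(f):\mu\in\mathcal{M}^e_f(M),\,\mu(M')=1\}=h^u_{UC}(f,M')>h^*$, so there is an ergodic measure $\mu$ with $\mu(M')=1$ and $h^u_\mu(f)>h^*$. To conclude I would check that $\mu\in F$: by Birkhoff's ergodic theorem $\mathcal{E}_n(y)\to\mu$ in the weak$^*$ topology for $\mu$-almost every $y$, and almost every such $y$ lies in $M'$, so property~(1) forces $\mathcal{E}_n(y)\in F$ for all large $n$; as $F$ is closed, $\mu\in F$. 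Hence $\mu$ is an ergodic measure in $F$ with $h^u_\mu(f)>h^*$, which is precisely unstable entropy density.

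The one genuinely delicate point --- the main obstacle --- is the passage from the local data (separated sets living inside $\overline{W^u(x,\delta)}$ for the single $\delta$ fed into Theorem~\ref{subset}) to the global invariant $h^u_{UC}(f,M')$, whose definition involves the limit $\delta\to 0$; this is handled by the radius-independence of the local unstable capacity entropy, which is built into the formalism of \cite{Tian} because forward iterates of a small unstable ball eventually contain arbitrarily large ones. Everything else --- shrinking $F$, fixing $\kappa$, and assembling the inequalities --- is routine bookkeeping.
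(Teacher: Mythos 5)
Your argument is correct and follows essentially the same route as the paper: shrink $F$ to a closed neighborhood, apply Theorem~\ref{subset} to get a closed $f$-invariant $M'$ with $\mathcal{E}_n(y)$ eventually in that neighborhood and $h^u_{UC}(f,M')>h^*$, invoke Proposition~\ref{VP} to produce an ergodic $\mu$ with $\mu(M')=1$ and $h^u_\mu(f)>h^*$, and use a generic point of $\mu$ in $M'$ to place $\mu$ in $F$. The only difference is that you spell out the passage from the separated-set estimate to $h^u_{UC}(f,M')>h^*$ (including the $\delta$-independence from \cite{Tian}), which the paper simply asserts.
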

\begin{proof}
Take any $\mu\in\mathcal{M}_f(M)$, any neighborhood $F$ of $\mu$, and any $h^*<h^u_{\mu}(f)$, pick up a smaller neighborhood $F'\ni \mu$ with $\overline{F'}\subseteq F$. Apply Theorem \ref{subset} for $F'$, there exists a closed $f$-invariant subset $M'$ of $M$ such that for any $y\in M'$ and any $n\in\mathbb{N}$ large enough, one has $\mathcal{E}_n(y)\in F'$; moreover, $h^u_{UC}(f, M')>h^*$. By Proposition \ref{VP}, there is an ergodic measure $\omega$ on $M$ such that $\omega(M')=1$ and $h^u_{\omega}(f)> h^*$. Let $y\in M'$ be a generic point of $\omega$. Since $\{\mathcal{E}_n(y)\}_n$ converges to $\omega$ and $\mathcal{E}_n(y)\in F'$ for $n$ large enough, one has $\omega\in F$.
\end{proof}

\begin{thm}\label{less}
Let $f:M\rightarrow M$ be a $C^1$-smooth partially hyperbolic diffeomorphism  and $\mathcal{G}=\{\log g_n\}^{\infty}_{n=1}$ be a sequence of sub-additive potentials of $f$.

1) Let $K\subseteq\mathcal{M}_f(M)$ be a closed set, and let $$^{K}G:=\{x\in M\mid \{\mathcal{E}_n(x)\}_n\,\text{has a limit point in}\,K\}.$$ Then $$P^u_{B}(f, \mathcal{G},\,^{K}G)\leq \sup\limits_{\mu\in K}\{h^u_{\mu}(f)+\mathcal{G}_{*}(\mu)\}.$$
In particular, if $K$ consists of a single point $\mu\in\mathcal{M}_f(M)$, then $$P^u_B(f, \mathcal{G}, G_{\mu})\leq h^u_{\mu}(f)+\mathcal{G}_{*}(\mu).$$

2) Let $K\subseteq\mathcal{M}_f(M)$ be non-empty, connected, and compact, then $$P^u_{B}(f, \mathcal{G}, G_K)\leq\inf\limits_{\mu\in K}\{h^u_{\mu}(f)+\mathcal{G}_{*}(\mu)\}.$$

\end{thm}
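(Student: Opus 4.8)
The plan is to prove the two inequalities as a standard ``Bowen's upper bound'' argument adapted to the unstable setting, carefully tracking the sub-additive potential via the Lyapunov exponent $\mathcal{G}_*$. For part 1), fix $K$ closed and set $s>\sup_{\mu\in K}\{h^u_\mu(f)+\mathcal{G}_*(\mu)\}$; I want to show $m^u(\mathcal{G},s,\epsilon,\,^KG,\overline{W^u(x,\delta)})=0$ for all $\delta$ and $x$ and small $\epsilon$. The key observation is that if $y\in{}^KG$ then some subsequence of $\mathcal{E}_n(y)$ converges into $K$, so for infinitely many $n$ the empirical measure $\mathcal{E}_n(y)$ lies in a prescribed neighborhood $F$ of $K$. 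Hence, for a fixed small $\gamma>0$ I would cover $K$ by finitely many balls $B(\mu_j,\zeta)$, $\mu_j\in K$, so that (using upper semi-continuity of $h^u_\cdot(f)$ together with upper semi-continuity of $\mathcal{G}_*$) every measure $\nu$ in the $\zeta$-neighborhood $F$ of $K$ satisfies $h^u_\nu(f)+\mathcal{G}_*(\nu)\le s-\gamma$. Next I would relate the unstable topological entropy/pressure carried by the set $M_{n,F}\cap\overline{W^u(x,\delta)}$ to this bound: an $(n,\epsilon)\,u$-spanning set of $M_{n,F}\cap\overline{W^u(x,\delta)}$ has cardinality at most roughly $e^{n(s-\gamma+o(1))}$ after weighting by $\sup_{B^u_n}\log g_n$, because any invariant measure supported on the $\omega$-limit of such empirical measures lies in $\overline F$ and is thus controlled by the variational principle for unstable pressure from \cite{Zhang1} (or, at the level of entropy, Proposition \ref{VP}), combined with the standard estimate $\limsup_n\frac1n\sup_{B^u_n(x,\epsilon)}\log g_n(y)\le\mathcal{G}_*(\nu)$ along empirical measures converging to $\nu$. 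Summing the weights $\exp(-sn_i+\sup_{B^u_{n_i}}\log g_{n_i})$ over such a cover at scale $n=N$ then gives a geometric bound that tends to $0$ as $N\to\infty$, yielding $P^u_B(f,\mathcal{G},\,^KG,\overline{W^u(x,\delta)})\le s$; taking $\sup_x$, then $\delta\to0$, then $s\downarrow\sup_{\mu\in K}$ finishes part 1). The single-point case is immediate since $G_\mu\subseteq{}^{\{\mu\}}G$.

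For part 2), the point is that when $K$ is connected (and compact), the saturated set $G_K=\{x: \emptyset\ne\bigcap\text{(limit points of }\mathcal{E}_n(x))\subseteq K\text{ and equals }K\}$ — i.e. the set of $x$ whose empirical measures have limit-point set exactly $K$ — is contained in $^{\{\mu\}}G$ for \emph{every} $\mu\in K$, because if the limit-point set of $\{\mathcal{E}_n(x)\}$ equals $K$ then in particular $\mu$ is a limit point, so $x\in G_\mu\subseteq{}^{\{\mu\}}G$. Therefore by part 1) applied with the closed set $\{\mu\}$, $P^u_B(f,\mathcal{G},G_K)\le P^u_B(f,\mathcal{G},\,^{\{\mu\}}G)\le h^u_\mu(f)+\mathcal{G}_*(\mu)$ for each $\mu\in K$, and taking the infimum over $\mu\in K$ gives the claim. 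Connectedness is used only to guarantee that $G_K$ is non-trivial and that ``limit-point set $=K$'' is the right notion of saturation; the monotonicity of $P^u_B$ in the set does the rest. (If one defines $G_K$ via ``$\overline{\{\mathcal{E}_n(x):n\}}\cap\mathcal{M}_f(M)\subseteq K$'' instead, then trivially $G_K\subseteq{}^KG$ and part 1) applies directly to give the $\sup$, but the intended sharper $\inf$ bound really does need the containment $G_K\subseteq G_\mu$ for each $\mu\in K$.)

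The main obstacle I anticipate is the careful passage, on the set $M_{n,F}\cap\overline{W^u(x,\delta)}$, from a cover by Bowen balls $B^u_{n_i}(x_i,\epsilon)$ with varying lengths $n_i\ge N$ to a uniform exponential bound on $\sum_i\exp(-sn_i+\sup_{B^u_{n_i}}\log g_{n_i})$: one must (a) control the number of Bowen balls of each length $n_i$ needed to cover the ``level-$n_i$'' part of $^KG\cap\overline{W^u(x,\delta)}$, using that every $y$ there has $\mathcal{E}_{n_i}(y)\in F$, and (b) bound $\sup_{B^u_{n_i}(x_i,\epsilon)}\log g_{n_i}$ in terms of $n_i(\mathcal{G}_*(\nu)+\text{error})$ uniformly — this requires a sub-additive/upper semi-continuity argument (essentially a one-sided ``pressure'' estimate on neighborhoods $F$ of $K$) that is the sub-additive analogue of the classical entropy estimate, and the bookkeeping of the $\epsilon\to0$, $\delta\to0$, $F\downarrow K$ limits has to be arranged so the error terms vanish in the correct order. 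Once that uniform exponential estimate is in place, the rest is the routine Bowen-dimension machinery.
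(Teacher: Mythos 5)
Your architecture for part 1) is the same as the paper's (cover $K$ by finitely many neighborhoods carrying a local counting bound, note that each $x\in{}^{K}G$ has $\mathcal{E}_n(x)$ in one of these neighborhoods for infinitely many $n$, cover the level-$n$ pieces $M_{n,F_i}\cap\overline{W^u(x,\delta)}$ by Bowen balls of length $n\geq N$, and sum a geometric series $\sum_{n\geq N}e^{-\gamma n}$ to get $m^u(\mathcal{G},s',\epsilon,{}^{K}G,\overline{W^u(x,\delta)})=0$), and your part 2) is exactly the paper's reduction: $G_K\subseteq{}^{\{\mu\}}G$ for every $\mu\in K$, then apply part 1) to the singleton and take the infimum. (Minor slip there: "limit-point set $=K$" gives $x\in{}^{\{\mu\}}G$, not $x\in G_\mu$; membership in $G_\mu$ would require $\mathcal{E}_n(x)\to\mu$. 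This does not affect the argument, since only ${}^{\{\mu\}}G$ is needed.)

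The genuine gap is the central counting estimate, which you name as the "main obstacle" but do not resolve. What is needed is the local bound
$$\inf_{F\ni\mu}\limsup_{n\to\infty}\frac{1}{n}\log P^u(F,\mathcal{G},n,\epsilon,x,\delta)\leq h^u_{\mu}(f)+\mathcal{G}_{*}(\mu),$$
where $P^u(F,\mathcal{G},n,\epsilon,x,\delta)$ is the supremum of $\sum_{y\in E}g_n(y)$ over $(n,\epsilon)\,u$-separated subsets $E$ of $\overline{W^u(x,\delta)}\cap M_{n,F}$; the paper imports this from Proposition 3.4 of \cite{Zhang2} and then only needs compactness of $K$ (no semicontinuity of $h^u_{\cdot}(f)$, since $h^u_{\mu}(f)+\mathcal{G}_{*}(\mu)\leq s$ already holds for every $\mu\in K$). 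Your proposed substitute --- upper semi-continuity of $h^u_{\cdot}(f)$ and of $\mathcal{G}_{*}$ together with the variational principle (Proposition \ref{VP}, or the unstable pressure variational principle), via "invariant measures supported on the $\omega$-limit of such empirical measures lie in $\overline{F}$" --- does not yield this bound: the variational principle controls pressure/entropy of the whole system or of compact invariant sets, whereas here one must bound the exponential growth of $g_n$-weighted separated sets inside the non-invariant sets $\overline{W^u(x,\delta)}\cap M_{n,F}$. Bridging that requires a Misiurewicz/Katok-type construction (time-averaging atomic measures on maximal separated sets and controlling conditional entropies $H(\alpha_0^{n-1}\mid\eta)$ along partitions subordinate to unstable manifolds, plus a sub-additive argument for the potential term), which is precisely the content of the cited proposition and is not supplied by semicontinuity arguments. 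As written, the proof is therefore incomplete at its key step; it becomes correct if you either cite that estimate or carry out the construction sketched above.
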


\begin{proof} Note that the saturated set $G_K$ is a subset of $^{K}G$, and moreover $G_K\subseteq ^{\{\mu\}}\!G$ for any $\mu\in K$, so it suffices to show 1).

Suppose $s=\sup\{h^u_{\mu}(f)+\mathcal{G}_{*}(\mu)\mid\mu\in K\}<\infty$, otherwise it is already done. For any $\gamma>0$, set $s'=s+2\gamma$. We shall show $$P^u_{B}(f, \mathcal{G},\,^{K}G)\leq s'.$$ By Definition \ref{Bowen pres}, we need to show that for any $\epsilon>0$, any $x\in M$, and any $\delta>0$, one has $$m^u(\mathcal{G}, s', \epsilon,\,^{K}G, \overline{W^u(x, \delta)})=0.$$

By the proof of Proposition 3.4 in \cite{Zhang2}, we have $$\inf\limits_{F\ni \mu}\limsup\limits_{n\to \infty}\dfrac{1}{n}\log P^u(F, \mathcal{G}, n, \epsilon, x, \delta)\leq h^u_{\mu}(f)+\mathcal{G}_{*}(\mu),$$ where $$\begin{aligned}
&P^u(F, \mathcal{G}, n, \epsilon, x, \delta)\\
=&\sup\{\sum\limits_{y\in E}g_n(y)\mid E\;\text{is an}\;(n, \epsilon) \,u\text{-separated subset of}\;\overline{W^u(x, \delta)}\cap M_{n,F}\}.
\end{aligned}
$$ So there exists a neighborhood of $\mu$, say $F(\mu, \epsilon)$, and a positive integer $M$ such that if $n>M$, then $$\dfrac{1}{n}\log P^u(F(\mu, \epsilon), \mathcal{G}, n, \epsilon, x, \delta)\leq h^u_{\mu}(f)+\mathcal{G}_{*}(\mu)+\gamma.$$

One can construct in a similar way a maximal $(n, \epsilon) \,u$-separated subset $\Gamma$ as in Proposition 2.1 of \cite{Zhang}, it is also an $(n, \epsilon)\, u$-spanning set, then for any $n>M$, one has
$$
\begin{aligned}
M^u(\mathcal{G}, s', n, \epsilon, M_{n, F(\mu, \epsilon)}, \overline{W^u(x, \delta)})\leq&\sum\limits_{x_i\in\Gamma}\sup\limits_{y\in B^u_n(x_i, \epsilon)} g_n(y)e^{-s'n}\\
\leq&P^u(F(\mu, \epsilon), \mathcal{G}, n, \epsilon, x, \delta)e^{-s'n}\\
\leq&e^{n(s+\gamma)}e^{-s'n}=e^{-n\gamma}.
\end{aligned}
$$  By compactness of $K$, we can find a finite open cover of $K$ which consists of $F(\mu_i, \epsilon), 1\leq i\leq m_{\epsilon}$ with $\mu_i\in K$. Set $$A_N=\bigcup\limits_{n\geq N}\bigcup_{i=1}^{m_{\epsilon}}M_{n, F(\mu_i, \epsilon)}.$$ Then for any $M\geq\max\limits_{1\leq i\leq m_{\epsilon}}M(F(\mu_i, \epsilon)),$ one has $$M^u(\mathcal{G}, s', M, \epsilon,\,^{K}G, \overline{W^u(x, \delta)})\leq m_{\epsilon}\sum_{n\geq M}e^{-\gamma n},$$ so $$m^u(\mathcal{G}, s', \epsilon,\,^{K}G, \overline{W^u(x, \delta)})=0.$$ Hence $$P^u_{B}(f, \mathcal{G},\,^{K}G)\leq \sup\limits_{\mu\in K}\{h^u_{\mu}(f)+\mathcal{G}_{*}(\mu)\}.$$
\end{proof}

\textbf{Now we proceed to prove Theorem \ref{main1}.}
\begin{proof}
The main idea is that for any $\eta>0$ we shall construct a closed subset $Z$ of $G_{K}$ such that $$P^u_B(f, \mathcal{G}, Z)\geq \inf\{h^u_{\mu}(f)+\mathcal{G}_{*}(\mu)\mid \mu\in K\}-\eta.$$

First we present the construction of the subset $Z$.
By the compactness of $K$, for any $\epsilon>0$, one can find a finite $\epsilon$-net of $K$, say $\mu_1, \mu_2, \cdots, \mu_N$. Since $K$ is connected, one can repeat some $\mu_i, 1\leq i\leq N$ if necessary, to get the new ones $\{\nu_i\}^{N'}_{i=1}$ such that they  still form a $\epsilon$-net of $K$ and $d(\nu_i, \nu_{i+1})<2\epsilon$. Continue this argument even further, one can find a sequence of measures, say $\{\nu'_i\}^\infty_{i=1}$, such that the closure of $\{\nu'_j\mid j>n\}$ equals $K$ for any positive integer $n$ and $\lim\limits_{j\to\infty}d(\nu'_j, \nu'_{j+1})=0$.

Set $$S^*=\inf\{h^u_{\mu}(f)+\mathcal{G}_{*}(\mu)\mid \mu\in K\}-\eta.$$
For the sequence $\{\nu'_i\}_{i\in\mathbb{N}}$, we will construct a subset $Z$ such that for any $x\in Z$, one has $\{\mathcal{E}_n(x)\}$ has the same limit point set as $\{\nu'_i\}_{i\in\mathbb{N}}$ and $P^u_B(f, \mathcal{G}, Z)\geq S^*.$

Suppose $\{\zeta_k\}_k$ and $\{\epsilon_k\}_k$ are two decreasing sequences with $\lim\limits_{k\to\infty}\zeta_k=\lim\limits_{k\to\infty}\epsilon_k=0.$ By Corollary \ref{inv}, for the $\eta>0$ above, one can find $\rho^*>0$ and $\epsilon^*>0$ such that for any neighborhood $F\subset\mathcal{M}(M)$ of $\mu\in K$ and any $\delta>0$, there exists a subset $C$ and a positive integer $n^*_{F, \mu, \eta}$ satisfying for any $x\in C$, if $n\geq n^*_{F, \mu, \eta}$, then \begin{align}\label{(4.1)}P^u(F, \mathcal{G}, n, \epsilon^*,  \rho^*, x, \delta)\geq e^{n\left(h^u_{\mu}(f)+\mathcal{G}_{*}(\mu)-\eta\right)}. \end{align}  Apply this to the sequence of measures $\{\nu'_k\}_k$ and neighborhoods $B(\nu'_k, \zeta_k)$, there exists a subset $C_k$ and positive integer $n_k$ such that for any $x_k\in C_k$, if $n\geq n_k$, there is a $(\rho^*, n, \epsilon^*)\,u$-separated set $\Gamma_k$ of $M_{n_{k}, B(\nu'_k, \zeta_k)}\cap \overline{W^u(x_k, \delta)}$ satisfying $$E(\Gamma_k):=\sum_{y\in\Gamma_k}g_{n_k}(y)\geq e^{n_kS^*} \quad(*).$$
Furthermore, we may further assume $n_k$ satisfies $$\rho^*n_k>2g(n_k)+1\;\text{and}\;\dfrac{g(n_k)}{n_k}\leq\epsilon_k, \quad(1).$$
By Lemma \ref{measure-nbh} and (1), for any $x\in\Gamma_k$ and $y\in B^u_{n_k}(g, x, \epsilon_k)$, one has $$\mathcal{E}_{n_k}(y)\in B(\nu'_k, \zeta_k+2\epsilon_k), \quad(2).$$

Now we use the orbit segment $\{x, fx, \cdots, f^{n_k-1}x\}(x\in \Gamma_k)$ to give the construction of $Z$. Let $\{N_k\}_k$ be an increasing sequence of positive integers such that $$n_{k+1}\leq\zeta_k\sum^{k}_{i=1}n_iN_i  \;\text{and}\, \sum^{k-1}_{i=1}n_iN_i\leq \zeta_k\sum^{k}_{i=1}n_iN_i, \quad(3).$$ Define new sequences as follows: $$n'_i:=n_k, \epsilon'_i:=\epsilon_k, x'_i=x_k, C'_i=C_k,\,\text{and}\, \Gamma'_i:=\Gamma_k$$ if $i=N_1+N_2+\cdots+N_{k-1}+q$ with $1\leq q\leq N_k-1.$ Set $$Z_k:=\bigcap^{k}\limits_{i=1}\bigcup\limits_{x'_i\in\Gamma'_i}f^{-M_{i-1}}B^u_{n_i}(g, x'_i, \epsilon'_i),$$ where $M_i=\sum^{i}_{l=1}n'_l$. By the definition of unstable almost product property for the blow up function $g$, $Z_k$ is a non-empty closed set. We can label each $Z_k$ by developing this formula from the branches of a labeled tree of height $k$. A branch is labeled by $(x'_1, \cdots, x'_k)$ with $x'_i\in \Gamma'_i$.

Set $Z:=\bigcap\limits_{k\geq 1}Z_k$. Next we are going to obtain several conclusions about $Z$.

\textit{\textbf{Claim I.}} Suppose we have two different points $x_i, y_i\in \Gamma'_i$. If $x\in B^u_{n'_i}(g, x_i, \epsilon'_i)$ and $y\in B^u_{n'_i}(g, y_i, \epsilon'_i)$, then $$\max\limits_{1\leq m\leq n_i-1}d^u(f^mx, f^my)>\epsilon^*/2.$$

Indeed $x_i, y_i\in \Gamma'_i$ amounts to say that they are $(\rho^*, n'_i, \epsilon^*)\, u$-separated, while based on (1), there is an $m\in \{1, \cdots, n'_i-1\}$ such that $$d^u(f^mx_i, f^my_i)>4\epsilon^*, d^u(f^mx_i, f^mx)\leq \epsilon'_i, \, \text{and}\, d^u(f^my_i, f^my)\leq \epsilon'_i.$$ So $$d^u(f^mx, f^my)\geq d^u(f^mx_i, f^my_i)-d^u(f^mx_i, f^mx)-d^u(f^my, f^my_i)>\epsilon^*/2.$$

\textit{\textbf{Claim II.}} $Z$ is a non-empty closed set, which can be written as a disjoint union of non-empty closed sets $Z(x_1, x_2,  \cdots)$, which is labeled by $(x_1, x_2, \cdots)$ with each $x_i\in \Gamma'_i$. Moreover, $Z\subseteq W^u(x'_1)$.

\textit{\textbf{Claim III.}} $Z\subseteq G_K$.

In fact, define a stretched sequence of measures $\{\nu''_q\}_q$ by $$\nu''_q=\nu'_k\;\text{if}\, \sum^{k-1}_{i=1}n_iN_i+1\leq q\leq \sum^k_{i=1}n_iN_i.\quad\quad(4)$$ It is obvious that the sequence $\{\nu''_q\}_q$ has the same limit point set as $\{\nu'_k\}_k$. For any $y\in Z$, if $\lim\limits_{n}d(\mathcal{E}_n(y), \nu''_n)=0,$ then $\{\mathcal{E}_n(y)\}_n$ and $\{\nu''_n\}_n$ have the same limit point set. By (3) and the construction of $\{\nu''_n\}$, one only needs to show $$\lim\limits_{k\to\infty}d(\mathcal{E}_{M_k}(y), \nu''_{M_k})=0,$$ where $M_k=\sum^k\limits_{l=1}n'_l.$

Suppose $$\sum^i\limits_{l=1}n_lN_l<M_k\leq \sum^{i+1}\limits_{l=1}n_lN_l,$$ and then $\nu''_{M_k}=\nu'_{i+1}$. By Lemma \ref{measure-nbh}, (1) and (4), one has $$
\begin{aligned}
d(\mathcal{E}_{M_k}(y), \nu''_{M_k})\leq&\dfrac{\sum^{i-1}\limits_{l=1}n_lN_l}{M_k}d(\mathcal{E}_{\sum^{i-1}\limits_{l=1}n_lN_l}(y), \nu''_{M_k})\\
+&\dfrac{n_iN_i}{M_k}(\zeta_i+2\epsilon_i+d(\nu'_i, \nu'_{i+1}))\\
+&\dfrac{M_k-\sum^i\limits_{l=1}n_lN_l}{M_k}(\zeta_{i+1}+2\epsilon_{i+1}).
\end{aligned}$$ Since $\lim\limits_{i}\zeta_i=0$, $\lim\limits_{i}\epsilon_i=0$, and $\lim\limits_{i}d(\nu'_i, \nu'_{i+1})=0$, we are done.

\textit{\textbf{Claim IV.}} $P^u_B(f, \mathcal{G}, Z)\geq S^*.$

For any $S<S^*$ and any $x\in C'_1$ with $Z\subseteq \overline{W^u(x'_1)}$, we shall show $$M^u(\mathcal{G}, S, N, \epsilon, Z, \overline{W^u(x'_1, \delta)})>0, \forall\, \delta>0\,\text{small enough}.$$ Since $Z\cap \overline{W^u(x, \delta)}$ is compact, we consider an arbitrary finite cover $\Gamma_F=\{B^u_{n_i}(x_i, \epsilon)\}^k_{i=1}$ with $n_i\geq N$. For each such a finite cover $\mathcal{C}$, we can derive a new one $\mathcal{C}'$ by replacing every $B^u_{n_i}(x_i, \epsilon)$ by $B^u_{M_i}(x_i, \epsilon)$ if $M_i\leq n_i< M_{i+1}$. Note that $\lim\limits_{n\to\infty}M_{n+1}/M_n=1$. Then $$M^u(\mathcal{G}, S, N, \epsilon, Z, \overline{W^u(x, \delta)})\geq \inf\limits_{\Gamma_F}\{\sum\limits_{B^u_{M_i}(x_i, \epsilon)\in \mathcal{C}'}(\sup\limits_{y\in B^u_{M_{i+1}}(x_i, \epsilon)}g_{n_i}(y))e^{-sM_{i+1}}\}.$$

For such a cover $\mathcal{C}'$, let $t$ be the largest value of $p$ for which there exists a $B^u_{M_p}(x,\epsilon)\in\mathcal{C}'$. Set $W_k=\prod\limits^k_{i=1}\Gamma'_i, k=1,...,t,$ and $\widetilde{W_t}=\bigcup^t\limits_{k=1}W_k.$ By \textit{\textbf{Claim I.}}, for any $y\in B^u_{M_p}(z, \epsilon)\cap Z$, there is a unique word $w_p\in W_p$ corresponding to $y$ in the following sense: $$w_p=(x_1, x_2, \cdots, x_{M_p})\Rightarrow y\in\bigcap^{M_p}\limits_{j=1}f^{-M_{j-1}}B^u_{n'_j}(g, x_j, \epsilon'_j),$$ where $M_0=0$ and $M_i=n'_1+\cdots+n'_i$. For $1\leq i\leq k$, we say a word $v\in W_i$ is a prefix of $w\in W_k$ if the first $i$ entries of $w$ coincide with $v$. 
For $W_k (k\geq 1)$, define $$E_n(W_k)=\prod\limits_{i=1}^k(\sum\limits_{y\in\Gamma'_i} g_{n'_i}(y)).$$

Suppose $W\subseteq \widetilde{W_t}$ contains a prefix of each word in $W_t$. Then it is obvious that $$\sum\limits_{k=1}^t|W\cap W_k|\dfrac{E_n(W_t)}{E_n(W_k)}\geq E_n(W_t),$$ and so $$\sum_{k=1}^t|W\cap W_k|\dfrac{1}{E_n(W_k)}\geq 1.$$ Since $\mathcal{C}'$ is a cover of $Z\cap \overline{W^u(x, \delta)}$, each word in $W_t$ has a prefix associated with some $B^u_{M_i}(x_i, \epsilon)\in \mathcal{C}'$ and $E_n(W_k)\geq e^{M_kS^*}$ based on $(*)$ before. Hence $$1\leq \sum_{k=1}^t|W\cap W_k|\dfrac{1}{E_n(W_k)}=\sum\limits_{B^u_{M_k}(x_i, \epsilon)\in \mathcal{C}'}\dfrac{1}{E_n(W_k)}\leq \sum\limits_{B^u_{M_k}(x_i, \epsilon)\in \mathcal{C}'}e^{-M_kS^*}.$$ Since $\lim\limits_{n\to\infty}M_{n+1}/M_n=1$, for any $0<\gamma\leq \frac{S^*-S}{S}$, there exists an $N$ such that $$|\dfrac{M_{n+1}}{M_n}-1|\leq \gamma, \forall\, n\geq N,$$ which implies $SM_{n+1}\leq S^*M_n$.

Set $m=\min\{\sup\limits_{y\in B^u_{n_i}(x_i, \epsilon)}g_{n_i}(y)\mid B^u_{M_i}(x_i, \epsilon)\in \mathcal{C}'\},$ then $$
\begin{aligned}
&\sum\limits_{B^u_{M_i}(x_i, \epsilon)\in \mathcal{C}'}(\sup\limits_{y\in B^u_{M_{i+1}}(x_i, \epsilon)}g_{n_i}(y))e^{-SM_{i+1}}\\
&\geq m\sum\limits_{B^u_{M_i}(x_i, \epsilon)\in \mathcal{C}'}e^{-SM_{i+1}}\geq m\sum\limits_{B^u_{M_i}(x_i, \epsilon)\in \mathcal{C}'}e^{-S^*M_{i}}\geq m>0.\\
\end{aligned}$$
So $M^u(\mathcal{G}, S, N, \epsilon, Z, \overline{W^u(x, \delta)})\geq m>0$ and $P^u_B(f, \mathcal{G}, Z)\geq S^*.$
\end{proof}


We present an application of Theorem \ref{main1} here.
\begin{prop}
Let $f:M\rightarrow M$ be $C^1$-smooth partially hyperbolic diffeomorphism, and $\mathcal{G}=\{\log g_n\}_{n\geq 1}$ be a sequence of sub-additive potentials of $f$ on M. Suppose $P^u_B(f, \mathcal{G}, G_{\mu})=h^u_{\mu}(f)+\mathcal{G}_{*}(\mu)$ for any $\mu\in \mathcal{M}_f(M)$. For any real valued continuous function $\phi\in C(M)$ and any real number $a$, set $$K_a=\{x\in M\mid \lim\limits_{n\to\infty}\frac{1}{n}\sum\limits_{k=0}^{n-1}\phi(f^kx)=a\},$$ then $$P^u_B(f, \mathcal{G}, K_a)=\sup\{h^u_{\mu}(f)+\mathcal{G}_*(\mu)\mid \mu\in \mathcal{M}_f(M)\,\text{and}\, \int_{M}\phi\,d\mu=a\}.$$
\end{prop}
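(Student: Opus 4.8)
The plan is to prove the identity by a two-sided estimate, using the monotonicity of the unstable Bowen topological pressure under set inclusion. This monotonicity is immediate from Definition \ref{Bowen pres}: if $Z_1\subseteq Z_2$, then any countable cover of $Z_2\cap\overline{W^u(x,\delta)}$ by unstable $(n,\epsilon)$-Bowen balls also covers $Z_1\cap\overline{W^u(x,\delta)}$, so $M^u(\mathcal{G},s,N,\epsilon,Z_1,\overline{W^u(x,\delta)})\leq M^u(\mathcal{G},s,N,\epsilon,Z_2,\overline{W^u(x,\delta)})$, and pushing this through the successive limits gives $P^u_B(f,\mathcal{G},Z_1)\leq P^u_B(f,\mathcal{G},Z_2)$. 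Write $\mathcal{M}_{f,a}:=\{\mu\in\mathcal{M}_f(M)\mid\int_M\phi\,d\mu=a\}$; since $\mu\mapsto\int_M\phi\,d\mu$ is weak$^*$-continuous, $\mathcal{M}_{f,a}$ is a closed subset of $\mathcal{M}_f(M)$. If $\mathcal{M}_{f,a}=\emptyset$, then $K_a=\emptyset$ (any weak$^*$-limit point of $\{\mathcal{E}_n(x)\}_n$ for $x\in K_a$ would be an $f$-invariant measure integrating $\phi$ to $a$) and the identity is trivial; so we may assume $\mathcal{M}_{f,a}\neq\emptyset$.

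For the upper estimate, I would first record the inclusion $K_a\subseteq{}^{\mathcal{M}_{f,a}}G$: if $x\in K_a$, then $\langle\phi,\mathcal{E}_n(x)\rangle=\frac1n\sum_{k=0}^{n-1}\phi(f^kx)\to a$, and taking a weak$^*$-convergent subsequence $\mathcal{E}_{n_j}(x)\to\mu$ (possible by compactness of $\mathcal{M}(M)$) yields an $f$-invariant $\mu$ with $\langle\phi,\mu\rangle=a$, i.e. $\mu\in\mathcal{M}_{f,a}$, so $x\in{}^{\mathcal{M}_{f,a}}G$. Then monotonicity together with part 1) of Theorem \ref{less}, applied to the closed set $K=\mathcal{M}_{f,a}$, gives
\[
P^u_B(f,\mathcal{G},K_a)\leq P^u_B(f,\mathcal{G},{}^{\mathcal{M}_{f,a}}G)\leq\sup\{h^u_\mu(f)+\mathcal{G}_*(\mu)\mid\mu\in\mathcal{M}_f(M),\ \int_M\phi\,d\mu=a\}.
\]

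For the lower estimate, fix any $\mu\in\mathcal{M}_{f,a}$. Every $x\in G_\mu$ satisfies $\mathcal{E}_n(x)\to\mu$ in the weak$^*$-topology, hence $\frac1n\sum_{k=0}^{n-1}\phi(f^kx)=\langle\phi,\mathcal{E}_n(x)\rangle\to\langle\phi,\mu\rangle=a$, so $G_\mu\subseteq K_a$. By monotonicity once more and the hypothesis $P^u_B(f,\mathcal{G},G_\mu)=h^u_\mu(f)+\mathcal{G}_*(\mu)$, we get
\[
P^u_B(f,\mathcal{G},K_a)\geq P^u_B(f,\mathcal{G},G_\mu)=h^u_\mu(f)+\mathcal{G}_*(\mu),
\]
and taking the supremum over $\mu\in\mathcal{M}_{f,a}$ yields the reverse inequality; combining the two estimates completes the proof.

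I do not expect a genuine analytic obstacle here: the substantive input is carried entirely by part 1) of Theorem \ref{less} on the upper side and by the assumed single-measure identity (which is exactly the conclusion of Theorem \ref{main1} specialized to $K=\{\mu\}$) on the lower side. The only steps deserving care are the two set inclusions $K_a\subseteq{}^{\mathcal{M}_{f,a}}G$ and $G_\mu\subseteq K_a$, both transparent once $K_a$, the generalized level set ${}^KG$, and the saturated set $G_\mu$ are rephrased through weak$^*$-limit points of empirical measures, together with the explicit statement of the monotonicity of $P^u_B(f,\mathcal{G},\cdot)$ in its set argument, which is already used implicitly in the proof of Theorem \ref{less}.
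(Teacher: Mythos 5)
Your proposal is correct and follows essentially the same route as the paper: the upper bound comes from $K_a\subseteq{}^{\mathcal{M}(a)}G$ with $\mathcal{M}(a)=\{\mu\in\mathcal{M}_f(M)\mid\int_M\phi\,d\mu=a\}$ closed and part 1) of Theorem \ref{less}, and the lower bound from $G_\mu\subseteq K_a$ together with the assumed identity $P^u_B(f,\mathcal{G},G_\mu)=h^u_\mu(f)+\mathcal{G}_*(\mu)$, exactly as in the paper (which phrases both inclusions via the identification $K_a=G^{\mathcal{M}(a)}$). Your explicit remarks on monotonicity of $P^u_B$ in the set argument and on the empty case $\mathcal{M}(a)=\emptyset$ are harmless additions, not a different method.
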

\begin{proof}
Set $\mathcal{M}(a)=\{\mu\in\mathcal{M}_f(M)\mid \int_M\phi\, d\mu=a\}$, then it is closed. Let $$G^{\mathcal{M}(a)}=\{x\in M\mid \{\mathcal{E}_n(x)\}_n \,\text{has all its limit points in}\,\mathcal{M}(a)\},$$ so $K_a=G^{\mathcal{M}(a)}$.

On one hand, note that $G^{\mathcal{M}(a)}\subseteq ^{\mathcal{M}(a)}\!\!G$, so by $1)$ of Theorem \ref{less}, one has $$P^u_B(f, \mathcal{G}, K_a)\leq\sup\{h^u_{\mu}(f)+\mathcal{G}_*(\mu)\mid \mu\in \mathcal{M}_f(M)\,\text{and}\, \int_{M}\phi\,d\mu=a\}.$$ On the other hand, for any $\mu\in \mathcal{M}(a)$, one has $G_{\mu}\subseteq G^{\mathcal{M}(a)}$. By assumption we know $$P^u_B(f, \mathcal{G}, G_{\mu})=h^u_{\mu}(f)+\mathcal{G}_{*}(\mu),$$ then $$P^u_B(f, \mathcal{G}, K_a)\geq\sup\{h^u_{\mu}(f)+\mathcal{G}_*(\mu)\mid \mu\in \mathcal{M}_f(M)\,\text{and}\, \int_{M}\phi\,d\mu=a\}.$$
\end{proof}

In particular, for the corresponding entropy, we have: 
\begin{coro}
Let $f:M\rightarrow M$ be $C^1$-smooth partially hyperbolic diffeomorphism. Suppose $h^u_B(f, G_{\mu})=h^u_{\mu}(f)$ for any $\mu\in \mathcal{M}_f(M)$. For any real valued continuous function $\phi\in C(M)$ and any real number $a$, consider the level set $K_a$ as before, then one has $$h^u_B(f, K_a)=\sup\{h^u_{\mu}(f)\mid \mu\in \mathcal{M}_f(M)\,\text{and}\, \int_{M}\phi\,d\mu=a\}.$$ 
\end{coro}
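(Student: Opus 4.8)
The plan is to obtain this corollary as the special case of the Proposition immediately above corresponding to the trivial potential. First I would fix the sequence $\mathcal{G}^{0}=\{\log g_{n}\}_{n\geq 1}$ with $g_{n}\equiv 1$ for every $n$, so that $\log g_{n}\equiv 0$. This is an admissible sequence of sub-additive potentials of $f$ in the sense of Definition \ref{pot}, since each constant function is continuous and the sub-additivity inequality reads $0\leq 0+0$. For this choice one has $\mathcal{G}^{0}_{*}(\mu)=\lim_{n\to\infty}\frac{1}{n}\int\log g_{n}\,d\mu=0$ for every $\mu\in\mathcal{M}_{f}(M)$, and, directly from Definition \ref{Bowen pres}, $P^{u}_{B}(f,\mathcal{G}^{0},Z)=h^{u}_{B}(f,Z)$ for every $Z\subseteq M$ --- this is exactly the normalization recorded there, ``the corresponding Bowen unstable topological entropy when the potentials vanish''.

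Next I would verify that the hypothesis of the Proposition holds for $\mathcal{G}=\mathcal{G}^{0}$: the assumed identity $h^{u}_{B}(f,G_{\mu})=h^{u}_{\mu}(f)$ for all $\mu\in\mathcal{M}_{f}(M)$ is precisely $P^{u}_{B}(f,\mathcal{G}^{0},G_{\mu})=h^{u}_{\mu}(f)+\mathcal{G}^{0}_{*}(\mu)$ after the substitutions above. Applying the Proposition with $\mathcal{G}=\mathcal{G}^{0}$ then gives
$$P^{u}_{B}(f,\mathcal{G}^{0},K_{a})=\sup\{h^{u}_{\mu}(f)+\mathcal{G}^{0}_{*}(\mu)\mid \mu\in\mathcal{M}_{f}(M)\ \text{and}\ \textstyle\int_{M}\phi\,d\mu=a\},$$
and substituting $P^{u}_{B}(f,\mathcal{G}^{0},K_{a})=h^{u}_{B}(f,K_{a})$ together with $\mathcal{G}^{0}_{*}(\mu)=0$ yields exactly the claimed formula.

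There is essentially no obstacle here; the only points requiring (routine) care are the two bookkeeping identifications --- that the constant sequence is an admissible sub-additive potential, so that the Proposition applies verbatim, and that the Bowen pressure collapses to the Bowen entropy under this choice --- both immediate from the definitions of Section 2. As an alternative to quoting the Proposition, one could rerun its short argument directly with $\mathcal{G}_{*}\equiv 0$: the inequality $h^{u}_{B}(f,K_{a})\leq\sup\{h^{u}_{\mu}(f):\int\phi\,d\mu=a\}$ follows from part 1) of Theorem \ref{less} applied to the closed set $\mathcal{M}(a)=\{\mu\in\mathcal{M}_{f}(M):\int_{M}\phi\,d\mu=a\}$, using $K_{a}=G^{\mathcal{M}(a)}\subseteq {}^{\mathcal{M}(a)}G$, while the reverse inequality follows from the inclusions $G_{\mu}\subseteq G^{\mathcal{M}(a)}=K_{a}$ for $\mu\in\mathcal{M}(a)$, the hypothesis $h^{u}_{B}(f,G_{\mu})=h^{u}_{\mu}(f)$, and monotonicity of $Z\mapsto h^{u}_{B}(f,Z)$ with respect to set inclusion.
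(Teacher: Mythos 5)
Your proposal is correct and matches the paper's intent exactly: the corollary is stated there as the immediate specialization of the preceding proposition to the vanishing potentials $\log g_n\equiv 0$, for which $\mathcal{G}_*(\mu)=0$ and $P^u_B(f,\mathcal{G},Z)=h^u_B(f,Z)$ by the convention in Definition \ref{Bowen pres}. Both your direct specialization and your alternative rerun of the proposition's argument are sound.
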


\section{Unstable Capacity Pressure}

\textbf{Now we proceed to prove Theorem \ref{main2}.}

\begin{proof}
Since $G_{K}\subseteq M$, one has $$\underline{CP}^u(f, \mathcal{G}, G_K)\leq P^{u}(f,\mathcal{G}).$$ We only need to show $$\underline{CP}^u(f, \mathcal{G}, G_K)\geq P^{u}(f,\mathcal{G}).$$ First, we construct a closed subset $Z$ of $G_K$ such that $\underline{CP}^u(f, \mathcal{G}, Z)$ can arbitrarily close to $P^{u}(f,\mathcal{G})$. By the variational principle for unstable topological pressure in \cite{Zhang}, for any $\eta>0$, there exists $\mu\in\mathcal{M}^{e}_{f}(M)$ such that $h^{u}_{\mu}(f)+\mathcal{G}_{*}(\mu)>P^{u}(f,\mathcal{G})-\dfrac{\eta}{2}$. For $\eta$ above, by (1), there exists $\epsilon_0>0$ and $N\in\mathbb{N}$ such that for any $\epsilon<\epsilon_0$ and $n\geq N$, one has $$\dfrac{\log \gamma_{n}(\mathcal{G},\epsilon)}{n}\leq\dfrac{\eta}{2}.$$ On the other hand, By Proposition \ref{first half for ergodic}, for the $\eta$, one can find $\rho^*>0$ and $\epsilon^*>0$ such that for any $\mu\in\mathcal{M}^e_{f}(M)$, any neighborhood $F$ of $\mu$ in $\mathcal{M}(M)$, and any $\delta>0$, there exists a subset $C$ of $M$ with $\mu(C)>0$ and a positive integer $n^*_{F, \mu, \eta}$ such that for any $x\in C$, if $\mathcal{N}\geq n^*_{F, \mu, \eta}$, one can find a $(\rho^*, \mathcal{N}, 3\epsilon^*)$ $u$-separated set $\Gamma_{\mathcal{N}}=\{z_{1}, z_{2},\cdots, z_{r}\}$ of $\overline{W^{u}(x,\delta)}\bigcap M_{n,F}$ such that $$\sum\limits_{z_{i}\in \Gamma_{\mathcal{N}}}g_{\mathcal{N}}(z_{i})\geq e^{\mathcal{N}\left(h^u_{\mu}(f)+\mathcal{G}_{*}(\mu)-\frac{\eta}{2}\right)}.$$ Take $\mathcal{N}$ large enough such that $$\mathcal{N}>N,\;\text{and}\;\dfrac{g(\mathcal{N})}{\mathcal{N}}<\dfrac{\rho^*}{2}.\;\;\;(2)$$ Suppose $\{\zeta_{k}\}_{k}$ and $\{\epsilon_{k}\}_{k}$ are two strictly decreasing sequences satisfying $$\lim\limits_{k\rightarrow\infty}\zeta_{k}=\lim\limits_{k\rightarrow\infty}\epsilon_{k}=0.$$ By the proof of Theorem \ref{main1} in Section 3, there exists a sequence of measures $\{\alpha_{1}, \alpha_{2},\cdots\}\subseteq K$ satisfying $\overline{\{\alpha_j \mid j\in \mathbb{N}, j\geq n\}}=K,$ for any $n\in\mathbb{N}$ and $\lim\limits_{j\rightarrow\infty}(\alpha_{j},\alpha_{j+1})=0$. For any fixed $z_0\in G_K\bigcap\overline{W^{u}(x,\delta)}$, there exists $n_k\in\mathbb{N}$ such that $\dfrac{1}{n_k}\sum\limits^{n_k-1}_{j=0}\delta_{f^{i}(z_0)}\in B(\alpha_{k},\zeta_{k})$. One can suppose $n_k$ is large enough such that $n_{k}> m(\epsilon_{k})$ and $\dfrac{g(n_k)}{n_k}<\epsilon_k$. Take a strictly increasing integer sequence  $\{N_k\}_k$ such that $$n_{k+1}\leq\zeta_k(\sum^{k}_{i=1}n_iN_i+\mathcal{N})  \;\text{and}\, \sum^{k-1}_{i=1}n_iN_i+\mathcal{N}\leq \zeta_k(\sum^{k}_{i=1}n_iN_i+\mathcal{N}), \quad(3).$$ Define the sequences $\{n'_{j}\}$, $\{\epsilon'_{j}\}$ and $\{\Gamma'_{j}\}$ inductively, by setting,  $$n'_0:=\mathcal{N},\,\epsilon'_0:=\epsilon^*,\, \Gamma'_0:=\Gamma_{\mathcal{N}},$$ and for $$j=\sum\limits^{k-1}_{i=1}N_{i}+q,\,n'_j:=n_{k},\,\epsilon'_j:=\epsilon_{k},\, \Gamma'_j:=\{z_0\}.$$ For any $s\in\mathbb{N}^{+}$, let
$$G^{\mathcal{N}}_{s}:=\bigcap\limits^{s}_{j=0}\left(\bigcup\limits_{x_{j}\in\Gamma'_{j}}f^{-M_{j-1}}B^{u}_{n_{j}}(g;x_{j},\epsilon'_{j})\right),$$ where $M_j:=\sum\limits^{j}_{l=0}n'_{l}$. By the unstable almost product property, $G^{\mathcal{N}}_{s}$ is a non-empty closed set.

Set $G^{\mathcal{N}}:=\bigcap\limits_{s\geq1}G^{\mathcal{N}}_{s}$. Then $G^{\mathcal{N}}$ is also non-empty and closed. With the similar discussion as the proof of Theorem \ref{main1} of \textit{\textbf{Claim III}}, one can show that $G^{\mathcal{N}}\subseteq G_K$.

We claim that there exists a set $Y\subseteq G^{\mathcal{N}}$, which is an $(\mathcal{N}, \epsilon^*)$ $u$-separated set satisfying $$\text{Card}(Y)=\text{Card}(\Gamma_{\mathcal{N}})\;\text{and}\;\sum\limits_{y\in Y}g_{\mathcal{N}}(y)\geq e^{\mathcal{N}\left(h^u_{\mu}(f)+\mathcal{G}_{*}(\mu)-\frac{\eta}{2}\right)}.$$

The proof of the claim: For any $z_{i}\in \Gamma_{\mathcal{N}}$, set $$G^{(i)}:=B^{u}_{\mathcal{N}}(g;z_i,\epsilon^*)\bigcap\left(\bigcap\limits_{j\geq1}f^{-M_{j-1}}B^{u}_{n'_j}(g;z_0,\epsilon'_j)\right).$$
Then $G^{\mathcal{N}}:=\bigcup\limits^{r}_{i=1}G^{(i)}$. By the unstable almost product property, $G^{(i)}$ is a non-empty and closed set. For $i=1,2,\cdots,r$, take $y_{i}\in G^{(i)}$, in which way we get a set $Y=\{y_{1}, y_{2},\cdots,y_{r}\}$. For any tow different points $z_{u},z_{v}\in\Gamma_{\mathcal{N}}$, they are $(\rho^*,\mathcal{N},3\epsilon^*)\,u$-separated. By $(2)$ there exists $m\in \{0,\cdots, \mathcal{N}-1\}$ such that $$d^u(f^m z_u, f^m z_v)\geq3\epsilon^*,\;\;d^u(f^m z_u, f^m y_u)\leq\epsilon^*,\;\text{and}\;d^u(f^m z_v, f^m y_v)\leq\epsilon^*.$$ So
$$d^u(f^m y_u, f^m y_v)\geq d^u(f^m z_u, f^m z_v)-d^u(f^m z_u, f^m y_u)-d^u(f^m z_v, f^m y_v)\geq\epsilon^*.$$
So the set $Y$ is an $(\mathcal{N},\epsilon^*)\,u$-separated set of $G^{(\mathcal{N})}\cap\overline{W^{u}(x,\delta)}$.
Moreover, $$\begin{aligned}
&\Lambda^u(\mathcal{G}, n, \dfrac{\epsilon^*}{2}, G^{(\mathcal{N})}, \overline{W^u(x, \delta)})\\&:=\inf\limits_{\Gamma}\{\sum\limits_{i}\sup\limits_{y\in B^u_{n}(x_i, \dfrac{\epsilon^*}{2})}g_n(y)\}&\geq \sum\limits_{y_{i}\in Y}g_n(y_i)\\
&\geq\sum\limits_{z_{i}\in \Gamma_{\mathcal{N}}}e^{\log g_n(z_i)-\frac{\mathcal{N}\eta}{2}}\;\;\text{by}\;(1)\\
&\geq e^{\mathcal{N}(\mathcal{G}_{*}(\mu)+h^{u}_{\mu}(f)-\frac{\eta}{2})}.
\end{aligned}$$
Then one has $$\underline{CP}^{u}(f,\mathcal{G},G_K)\geq \underline{CP}^{u}(f,\mathcal{G},G^{\mathcal{N}})\geq \mathcal{G}_{*}(\mu)+h^{u}_{\mu}(f)-\frac{\eta}{2})\geq P^{u}(f,\mathcal{G})-\eta.$$
By the arbitrariness of $\eta$, one gets  $$\underline{CP}^u(f, \mathcal{G}, G_K)\geq P^{u}(f,\mathcal{G}).$$
\end{proof}

\proof[Acknowledgements] The first author is supported by the NSFC (National Science Foundation of China) grant with grant No.\,11501066, and the grant from the Department of Education of Chongqing City with contract No.\,KJQN202100722 in Chongqing Jiaotong University. She is also sponsored by Natural Science Foundation of Chongqing, China, with grant No.\ cstc2021jcyj-msxmX1045.

The second author is sponsored by Natural Science Foundation of Chongqing, China, with grant No.\ cstc2021jcyj-msxmX1042,
as well as Chongqing Key Laboratory of Analytic Mathematics and Applications in Chongqing University.

\end{document}